\newcommand{\margnote}[1]{
\ifthenelse{\boolean{shownotes}}%
{\marginpar{\raggedright\tiny\texttt{#1}}}%
{}%
}
\newcommand{\hole}[1]{
\ifthenelse{\boolean{shownotes}}%
{\begin{center} \fbox{ \rule {.25cm}{0cm}
\rule[-.1cm]{0cm}{.4cm} \parbox{.85\textwidth}{\begin{center}
\texttt{#1}\end{center}} \rule {.25cm}{0cm}}\end{center}}
{}
}
\newtheorem{thm}{Theorem}[section]
\theoremstyle{plain}
\newtheorem{lemma}[thm]{Lemma}
\theoremstyle{plain}
\newtheorem*{thm*}{Theorem}
\theoremstyle{plain}
\newtheorem{prop}[thm]{Proposition}
\theoremstyle{plain}
\theoremstyle{definition}
\newtheorem{definition}[thm]{Definition}    
\theoremstyle{remark}
\newcommand{\T}{\mathbb{T}}
\newcommand{\R}{\mathbb{R}}
\newcommand{\Z}{\mathbb{Z}}
\newcommand{\N}{\mathbb{N}}
\newcommand{\ua}{u^{\alpha}}
\newcommand{\un}{u^{N}}
\newcommand{\omn}{\omega^{N}}
\newcommand{\oa}{\omega^{\alpha}}
\newcommand{\weakto}{\rightharpoonup}
\newcommand{\weaktos}{\stackrel{*}{\rightharpoonup}}
\newcommand{\Cs}{C_{u_0}^{s,T}}
\newcommand{\Csn}{C_{u^N_0}^{s,T}}
\newcommand{\curl}{\mathop{\mathrm {curl}}}
\newcommand{\diver}{\mathop{\mathrm {div}}}
\DeclareMathOperator{\lapl}{\Delta}
\DeclareMathOperator{\idnt}{I}
\numberwithin{equation}{section}
\subjclass{Primary: 76B03, 35Q35. Secondary: 35Q31, 76F65.}
\keywords{2D Euler equations, Euler-Voigt equations, vorticity, turbulence, quantitative estimates.}
\begin{document}

\title[Convergence of the Euler-Voigt approximation]{Convergence of the Euler-Voigt equations to the Euler equations in two dimensions}

\author[S. Abbate]{Stefano Abbate}
\address[S. Abbate]{Gran Sasso Science Institute (GSSI), Viale Francesco Crispi 7, I-67100, L'Aquila, Italy}
\email[]{\href{stefano.abbate@}{stefano.abbate@gssi.it}}

\author[L. C. Berselli]{Luigi C. Berselli}
\address[L. C. Berselli]{Dipartimento di Matematica, Universit\`a di Pisa, Via F. Buonarroti 1/c, I56127, Pisa, Italy}
\email[]{\href{luigi.carlo.berselli@}{luigi.carlo.berselli@unipi.it}}

\author[G. Crippa]{Gianluca Crippa}
\address[G. Crippa]{Departement Mathematik und Informatik, Universität Basel, Spiegelgasse 1, CH-4051, Basel, Switzerland}
\email[]{\href{gianluca.crippa@}{gianluca.crippa@unibas.ch}}

\author[S. Spirito]{Stefano Spirito}
\address[S. Spirito]{DISIM - Dipartimento di Ingegneria e Scienze dell'Informazione e Matematica\\ Universit\`a  degli Studi dell'Aquila \\Via Vetoio \\ 67100 L'Aquila \\ Italy}
\email[]{\href{stefano.spirito@}{stefano.spirito@univaq.it}}

\begin{abstract}
  In this paper, we consider the two-dimensional torus and we study
  the convergence of solutions of the Euler-Voigt equations to
  solutions of the Euler equations, under several regularity
  settings. More precisely, we first prove that for weak solutions of the
  Euler equations with vorticity in~$C([0,T];L^2(\T^2))$ the
  approximating velocity converges strongly in
  $C([0,T];H^1(\T^2))$. Moreover, for the unique Yudovich solution of
  the $2D$ Euler equations we provide a rate of convergence for the
  velocity in~$C([0,T];L^2(\T^2))$. Finally, for classical solutions
  in higher-order Sobolev spaces we prove the convergence with
  explicit rates of both the approximating velocity and the
  approximating vorticity in $C([0,T];L^2(\T^2))$.
\end{abstract}

\maketitle

\section{Introduction}
Let $\mathbb{T}^2$ be the two-dimensional flat torus and let
$T>0$. The $2D$ Euler equations for an incompressible
inviscid fluid are
\begin{flalign}
	\begin{cases}
		\partial_t u +u\cdot \nabla u = -\nabla p &\quad \text{on}  \quad(0,T) \times \mathbb{T}^2 \\
		\diver u = 0&\quad \text{on}\quad (0,T) \times \mathbb{T}^2 \\
		u(0, \cdot) = u_0&\quad \text{on} \quad\mathbb{T}^2,
	\end{cases}\label{eq:e}
\end{flalign}
where the unknowns are the velocity field $u:(0,T)\times\T^{2}\mapsto\R^{2}$ and the
scalar pressure $p:(0,T)\times\T^{2}\mapsto\R$. The initial datum
$u_{0}:\T^{2}\mapsto\R^{2}$ is a given divergence-free vector field with zero average.

In this paper, we are interested in studying convergence properties of
a large scale approximation (of the $\alpha$-family) of the system
\eqref{eq:e}, introduced in the literature to obtain reliable
simulations of turbulent flows. Indeed, as a turbulent regime is
approached, producing a {\em Direct Numerical Simulation (DNS)} of the
flow becomes computationally prohibitive, especially for
three-dimensional fluids. A natural alternative is that of
\textit{averaging} the equations in some way and then to
consider/simulate the flow only for large scales,
see~\cite{GOP04}. Models obtained by the procedure described above are
often called {\em Large Eddy Simulations (LES)} models and they have
been extensively studied in literature, see for example \cite{GOP04,
  LL03, LL06, BB12}. In particular, we refer to \cite{BIL06} for a
more complete overview of LES models and some theoretical results.

In the context of the incompressible Euler equations two main models have been
well-studied: the $\alpha$-Euler equations introduced in \cite{H02, HMR98}, and the
Euler-Voigt equations introduced in \cite{CLT06}. The Euler-Voigt model is the main focus
of the present paper and is given by the following system
\begin{equation}\label{eq:ev}
\begin{cases}
\partial_t u^\alpha-\alpha\Delta\partial_t\ua +u^\alpha\cdot \nabla u^\alpha= -\nabla p^\alpha&\quad \text{on}  \quad(0,T) \times \mathbb{T}^2 \\
\diver u^\alpha  = 0&\quad \text{on}\quad (0,T) \times \mathbb{T}^2 \\
u^\alpha(0, \cdot) = u^\alpha_0 &\quad \text{on} \quad\mathbb{T}^2. 
\end{cases}
\end{equation}
In \eqref{eq:ev} the unknowns are
$u^\alpha:(0,T) \times \mathbb{T}^2 \mapsto \mathbb{R}^2$ and
$p^\alpha: (0,T) \times \mathbb{T}^2 \mapsto \mathbb{R}$, and
$u^\alpha_0:\T^{2}\mapsto\R^{2}$ is a zero average divergence-free
initial datum. The parameter $\alpha>0$ has the dimensions of a
squared length and is connected with the square of the smallest
resolved scale. We write simply Euler-Voigt and not
``$\alpha$-Euler-Voigt'' to avoid any possible confusion with the
$\alpha$-Euler model, even if \eqref{eq:ev} is a family of
problems parameterized by the parameter $\alpha>0$.

To briefly explain the derivation of the Euler-Voigt equation, we
begin by applying the differential filtering operator obtained by
inverting (in the periodic setting) the Helmholtz operator
\begin{equation*}
	\overline{f}:= (\idnt-\alpha \lapl)^{-1} f,\qquad \text{for some} \enskip
        \alpha>0,
\end{equation*} 
and the ``filter'' action --denoted by $\overline{(\cdot)}$-- is linked with the damping of the Fourier
coefficients since  they satisfy the following equality
\begin{equation*}
  \widehat{\overline{f}}_{k}
  =\frac{
          \widehat{f}_k}{1+\alpha|k|^2}\qquad \text{for all 
          wave-numbers }k\in\Z^{3}. 
\end{equation*}
Applying then the Helmholtz filtering  to each scalar
component of the (nonlinear)
Euler equations \eqref{eq:e}, yields
\begin{flalign*}
	\begin{cases}
		\partial_t \overline{u} + \nabla\cdot( \overline{u \otimes u}) = -\nabla \overline{p}, &\quad \text{on}  \quad(0,T) \times \mathbb{T}^2 \\
		\diver \overline{u}= 0, &\quad \text{on}  \quad(0,T) \times \mathbb{T}^2.
	\end{cases}
\end{flalign*}
However, this system is not closed, since it does not depend only on
the filtered variables $(\overline{u},\overline{p})$, due to the
presence of a nonlinear convective term. Thus, the standard
approach is to introduce the Reynolds stress tensor 
$\mathcal{R}(u, u) := \overline{u \otimes u}- \overline{u} \otimes
\overline{u}$. Specifically, we add and subtract
$\nabla\cdot (\overline{u} \otimes \overline{u})$, yielding
\begin{flalign*}
	\begin{cases}
		\partial_t \overline{u} + \nabla\cdot( \overline{u} \otimes\overline{u})+\nabla\cdot  \mathcal{R}(u, u) = -\nabla \overline{p}, &\quad \text{on}  \quad(0,T) \times \mathbb{T}^2 \\
		\diver \overline{u}= 0, &\quad \text{on}  \quad(0,T) \times \mathbb{T}^2.
	\end{cases}
\end{flalign*}
Then, if one considers the simplified Bardina-like approximation of
the Reynolds stress tensor
$\mathcal{R}(u, u)\sim \overline{\overline{u} \otimes \overline{u}}-
\overline{u} \otimes \overline{u}$, introduced and studied in
\cite{BFR80}, the Euler-Voigt equations \eqref{eq:ev} for a give
$\alpha>0$ is obtained, see also \cite{CLT06}. An more general
approach within the framework of approximate deconvolution models is
also presented in \cite{BKR2016}.

One of the most important properties of the Euler-Voigt equations (but the same
is valid also for its viscous counterpart, the Navier-Stokes-Voigt
equations) is that the filtering \& modeling is not introducing
extra dissipation, but a dispersive effect, which allows for global in time existence
and uniqueness of weak solutions, also in
the three dimensional case.\\
Our main objective is to rigorously study the ``consistency of the
model'', \textit{i.e.}, the convergence as $\alpha\to 0^{+}$ of
solutions of \eqref{eq:ev} towards solutions of the $2D$ 
Euler equations \eqref{eq:e} in different regularity settings and
also, when possible, to provide rates of convergence.

A main difficulty in studying the convergence of solutions of
\eqref{eq:ev} to the corresponding solutions of \eqref{eq:e} lies in
the structure of the approximating vorticity equation. A very special
property of the $2D$ Euler equations, which has relevant consequences
on the proofs of several results, is that the vorticity
$\omega:=\curl\,u=\partial_{1}u_{2}-\partial_{2} u_{1}$ formally
satisfies the following scalar equation
\begin{equation}\label{eq:transpvort}
\partial_t\omega+u\cdot\nabla\omega=0.
\end{equation}
The equation \eqref{eq:transpvort} is a nonlocal transport equation
with a divergence-free vector field $u$ advective the vorcitity $\omega$. This implies that all the
$L^p$-norms of $\omega$, with $p\in[1,\infty]$, are formally conserved. On the other hand, for the Euler-Voigt equations, defining $\oa:=\curl\ua$ it holds that 
\begin{equation}\label{eq:transpvortalpha}
\partial_t\oa-\alpha\Delta\partial_t\oa+\ua\cdot\nabla\oa=0.
\end{equation}

The equation \eqref{eq:transpvortalpha} is not anymore a transport
equation for $\omega^{\alpha}$. In particular, while it holds that
\begin{equation*}
\|\oa(t)\|_2^2+\alpha\|\nabla\oa(t)\|_2^2=\|\oa_0\|_2^2+\alpha\|\nabla\oa_0\|_2^2,
\end{equation*}
it is not clear how to get the analogous bound 
$\oa \in L^{\infty}(0,T;L^{p}(\T^2))$, uniformly in $\alpha$, when
$p\not=2$. As a consequence, some convergence results, specifically in
weak regularity settings, are different from the ones obtained by
other approximation methods which are based on the use of the
transport equation 
for the approximate vorticity as, \textit{e.g.}, for the
$\alpha$-Euler approximation, see \cite{ACS24, BI17, BILL20, LT10,
  LLTZ15}.

We now state our main results and we compare them with the ones
available for other approximation schemes, notably the $\alpha$-Euler
approximation. For the notations and the definitions appearing in the
following theorems we refer to Section \ref{sec:prelim}. Concerning
the initial datum of \eqref{eq:ev} we assume that for any $\alpha>0$,
$u_0^{\alpha}$ are zero-average divergence-free vector fields such
that
\begin{align}
&\bullet\,\,u_0^{\alpha}\in C^{\infty}(\T^2)\mbox{ for any fixed }\alpha>0,\label{eq:aid1}\\
&\bullet\,\,\{u_0^{\alpha}\}_{\alpha}\subset H^{1}(\T^2),\quad\{\sqrt{\alpha}\nabla\omega_0^{\alpha}\}_{\alpha}\subset L^{2}(\T^2), \mbox{ uniformly in }\alpha>0.\label{eq:aid2}
\end{align}
 The first result of the paper concerns the convergence of solutions
 of \eqref{eq:ev} towards solutions of \eqref{eq:e} in the setting of
 weak solutions with finite enstrophy, namely weak solutions of
 \eqref{eq:e} such that  $\omega\in L^{\infty}(0,T;L^{2}(\T^2))$. 
\begin{thm}\label{teo:weak}
Let $u_0$ be a zero average divergence-free vector field such that~$\omega_0=\curl u_0\in L^{2}(\T^2)$. Let $\{\ua_0\}_{\alpha}$ satisfy \eqref{eq:aid1} and \eqref{eq:aid2} and assume in addition that 
\begin{equation}\label{eq:convin}
\begin{aligned}
&\ua_0\to u_0\mbox{ in }H^{1}(\T^2),\quad&\sqrt{\alpha}\nabla\omega^{\alpha}_0\to 0\mbox{ in }L^{2}(\T^2). 
\end{aligned}
\end{equation}
Then, up to subsequences, there exists $u\in C([0,T];H^{1}(\T^2))$ such that 
\begin{align}
&\ua\to u\mbox{ in }C([0,T];H^1(\T^2)),\label{eq:convuth}\\
&\omega^{\alpha}\to \omega\mbox{ in }C([0,T];L^2(\T^2)).\label{eq:convoth}
\end{align}
Moreover, $u$ is a weak solution of \eqref{eq:e} and 
\begin{align}
&\|u(t)\|_2=\|u_0\|_2,\enskip \mbox{ for any }t\in[0,T],\label{eq;energyth}\\
&\|\omega(t)\|_2=\|\omega_0\|_2, \enskip \mbox{ for any }t\in[0,T].\label{eq;enstrophyth}
\end{align}
\end{thm}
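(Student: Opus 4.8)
The plan is to run a compactness argument driven by the two energy-type identities of the Voigt system, and then to upgrade the resulting weak convergence of the vorticity to strong convergence by playing the enstrophy balance of \eqref{eq:transpvortalpha} against the \emph{conservation} of enstrophy for the limiting Euler flow. First I would record uniform bounds. Testing the momentum equation in \eqref{eq:ev} against $\ua$ and using $\diver\ua=0$ gives the velocity balance $\|\ua(t)\|_2^2+\alpha\|\nabla\ua(t)\|_2^2=\|\ua_0\|_2^2+\alpha\|\nabla\ua_0\|_2^2$, while \eqref{eq:transpvortalpha} yields the enstrophy balance already recorded above. Since in two dimensions $\|\nabla\ua\|_2=\|\oa\|_2$ for zero-average divergence-free fields, and since \eqref{eq:aid2} and \eqref{eq:convin} keep both right-hand sides bounded with the $\alpha$-terms vanishing, I obtain uniform bounds for $\ua$ in $L^\infty(0,T;H^1)$ and for $\oa$ in $L^\infty(0,T;L^2)$, together with $\sqrt\alpha\,\nabla\oa$ bounded in $L^\infty(0,T;L^2)$.

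Next I would produce compactness. Rewriting the momentum equation as $\partial_t\ua=-(\idnt-\alpha\lapl)^{-1}\mathbb P\,\diver(\ua\otimes\ua)$ and using that $(\idnt-\alpha\lapl)^{-1}$ is a contraction on every $H^s$ (its Fourier symbol is $\le1$) and that $\|\ua\otimes\ua\|_2\le C\|\ua\|_4^2\le C\|\ua\|_2\|\nabla\ua\|_2$ by Ladyzhenskaya's inequality, I bound $\partial_t\ua$ uniformly in $L^\infty(0,T;H^{-1})$. By the Aubin--Lions--Simon lemma along the chain $H^1\hookrightarrow\hookrightarrow L^2\hookrightarrow H^{-1}$, up to a subsequence $\ua\to u$ strongly in $C([0,T];L^2)$, $\ua\weaktos u$ in $L^\infty(0,T;H^1)$, and $\oa\weaktos\omega=\curl u$ in $L^\infty(0,T;L^2)$. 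Passing to the limit in the weak formulation of \eqref{eq:ev} identifies $u$ as a weak solution of \eqref{eq:e}: the Voigt term is $O(\alpha)$, and the nonlinearity passes because $\ua\otimes\ua\to u\otimes u$ in $C([0,T];L^1)$ (write $\ua\otimes\ua-u\otimes u=(\ua-u)\otimes\ua+u\otimes(\ua-u)$ and use the strong $L^2$ convergence with the uniform $L^4$-bound). In particular $\omega=\curl u\in L^\infty(0,T;L^2)$ solves $\partial_t\omega+\diver(u\omega)=0$ distributionally.

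The heart of the matter is then the conservation of enstrophy for the limit. Since $u\in L^\infty(0,T;W^{1,2})$ is divergence-free and $\omega\in L^\infty(0,T;L^2)$, the pair $(u,\omega)$ sits exactly in the DiPerna--Lions setting with conjugate exponents $p=p'=2$, so the transport equation is renormalized; taking the renormalization $\beta(s)=s^2$ gives that $t\mapsto\|\omega(t)\|_2$ is constant, and weak continuity in time together with constancy of the norm upgrades $\omega$ to $C([0,T];L^2)$ with $\|\omega(t)\|_2=\|\omega_0\|_2$, which is \eqref{eq;enstrophyth}. This is indispensable: the balance in \eqref{eq:transpvortalpha} by itself only gives $\limsup_\alpha\|\oa(t)\|_2\le\|\omega_0\|_2$, and without conservation for the limit one could not rule out a strict loss of enstrophy as $\alpha\to0^+$.

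Finally I would close the loop to get strong convergence. For fixed $t$, the convergence $\oa\to\omega$ in $C([0,T];H^{-1})$ (which follows from $\ua\to u$ in $C([0,T];L^2)$ since $\oa=\curl\ua$) together with the uniform $L^2$-bound gives $\oa(t)\weakto\omega(t)$ in $L^2$, while the enstrophy balance gives $\|\oa(t)\|_2^2\le\|\oa_0\|_2^2+\alpha\|\nabla\oa_0\|_2^2\to\|\omega_0\|_2^2=\|\omega(t)\|_2^2$; weak lower semicontinuity then forces $\|\oa(t)\|_2\to\|\omega(t)\|_2$, hence strong $L^2$ convergence at each time. To make this uniform I would expand
\[
\|\oa(t)-\omega(t)\|_2^2=\big(\|\oa(t)\|_2^2-\|\omega(t)\|_2^2\big)-2\langle\oa(t)-\omega(t),\omega(t)\rangle,
\]
bound the first bracket by $\|\oa_0\|_2^2+\alpha\|\nabla\oa_0\|_2^2-\|\omega_0\|_2^2\to0$ uniformly in $t$, and control the pairing by covering the compact trajectory $\{\omega(t):t\in[0,T]\}\subset L^2$ with finitely many smooth functions and using the uniform $H^{-1}$ convergence; this yields $\oa\to\omega$ in $C([0,T];L^2)$, i.e.\ \eqref{eq:convoth}. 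Since $\|\nabla(\ua-u)\|_2=\|\oa-\omega\|_2$, this is \eqref{eq:convuth}, and \eqref{eq;energyth} follows from the velocity balance because $\alpha\|\nabla\ua(t)\|_2^2=\alpha\|\oa(t)\|_2^2\to0$. The main obstacle is precisely this strong convergence of the vorticity: it cannot come from compactness alone (there is no uniform control better than $L^\infty(0,T;L^2)$ and no transport structure for $\oa$), and must instead be extracted from the rigidity of the two enstrophy quantities, which is why the renormalization and enstrophy-conservation of the limiting solution is the crucial input.
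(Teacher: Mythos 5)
Your proposal is correct and follows the paper's overall strategy: uniform bounds from the two Voigt balances, Aubin--Lions compactness giving $\ua\to u$ in $C([0,T];L^2(\T^2))$ and $\oa\weaktos\omega$ in $L^{\infty}(0,T;L^2(\T^2))$, identification of $u$ as a weak Euler solution, renormalization of the limiting vorticity equation in the DiPerna--Lions setting $p=p'=2$ to obtain \eqref{eq;enstrophyth} and $\omega\in C([0,T];L^2(\T^2))$ (this is exactly what the paper imports from Lopes Filho--Mazzucato--Nussenzveig Lopes), and finally the upgrade of $\oa(t)\weakto\omega(t)$ to strong convergence by playing the Voigt enstrophy balance against conservation for the limit. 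You deviate in two local steps, both legitimately and in ways worth comparing. For \eqref{eq;energyth} the paper cites the energy-conservation theorem of Cheskidov et al.\ for weak solutions with $\omega\in L^{\infty}(0,T;L^p)$, $p>3/2$, whereas you read it off directly from $\|\ua(t)\|_2^2+\alpha\|\nabla\ua(t)\|_2^2=\|\ua_0\|_2^2+\alpha\|\nabla\ua_0\|_2^2$ together with the strong $C([0,T];L^2)$ convergence and $\alpha\|\nabla\ua(t)\|_2^2=\alpha\|\oa(t)\|_2^2\lesssim\alpha\to 0$; this is more elementary and self-contained, though it only works because the Voigt approximation is itself conservative. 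For the passage from pointwise-in-time to uniform-in-time strong convergence, the paper verifies the sequential criterion $\oa(t_\alpha)\to\omega(t)$ for every $t_\alpha\to t$, while you expand $\|\oa(t)-\omega(t)\|_2^2=\bigl(\|\oa(t)\|_2^2-\|\omega(t)\|_2^2\bigr)-2(\oa(t)-\omega(t),\omega(t))$, bound the norm defect uniformly by the initial data (using \eqref{eq;enstrophyth}), and control the pairing by an $\varepsilon$-net of the compact trajectory $\{\omega(t):t\in[0,T]\}\subset L^2$ combined with $\|\oa-\omega\|_{C([0,T];H^{-1})}\le\|\ua-u\|_{C([0,T];L^2)}\to 0$; this is a more quantitative mechanism (and correctly requires $\omega\in C([0,T];L^2)$ first, which you establish before using it). Your time-derivative bound is also slightly sharper: exploiting that $(\idnt-\alpha\lapl)^{-1}$ is a contraction on every $H^s$ uniformly in $\alpha$ gives $\partial_t\ua$ bounded in $L^{\infty}(0,T;H^{-1})$, versus the paper's $H^{-s}$, $s>2$. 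One cosmetic caveat: $\beta(s)=s^2$ is not directly admissible in the DiPerna--Lions renormalization class since $\omega^2$ is merely $L^1$; one should renormalize with truncations of $s^2$ and pass to the limit monotonically, which is precisely what the result the paper cites packages --- worth a sentence in a final write-up, but not a gap in the argument.
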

We note that in general for any $p\in(1,\infty]$ it is possible to construct weak solutions of
\eqref{eq:e} with vorticity in $L^{\infty}(0,T;L^{p}(\T^2))$, see \cite{DPM87}. Moreover, for solutions
which are limit of various approximation schemes, including
the $\alpha$-Euler approximation, see \cite{C22, CLLS16, CCS20,
  ACS24}, it is possible to prove the conservation of the kinetic energy. For the end-point case $p=1$
 and the case of positive Radon measure
it is possible to prove the existence of weak
solutions, see \cite{VW93} and \cite{D91} respectively, and the lack of anomalous dissipation in the vanishing
viscosity limit, \cite{DRP24}. On the other hand, in the case of the Euler-Voigt
equations, in Theorem \ref{teo:weak} we are able to consider only the
case $p=2$.\\

Next, we remark in the class of weak solutions of \eqref{eq:e} with vorticity in $L^{\infty}(0,T;L^{p}(\T^2))$ uniqueness fails for some $p$ close to one, see
\cite{BCK24}, and for any $1\leq p<\infty$ if one considers external forces
in \eqref{eq:e}, see \cite{V18, ABCGJK24}. On the other hand, uniqueness is known for $p=\infty$, namely in the class of Yudovich weak solutions, see \cite{Y63}.  Therefore, in this
class it is reasonable to expect that in addition some rate of
convergence may be proved. This is indeed the content of the following
result.
\begin{thm}\label{teo:yudovich}
Let $u_0$ be a zero-average divergence-free vector field such that
$\omega_0=\curl u_0\in L^{\infty}(\T^2)$ and let $u$ be the unique Yudovich solution of \eqref{eq:e}. Let $\{\ua_0\}_{\alpha}$ satisfy \eqref{eq:aid1} and \eqref{eq:aid2}. Then, given $T>0$ there exist  constants $C_1$, $C_2$, and $C_3$, depending only on $T$ and $\|\omega_0\|_{\infty}$, such that 
\begin{equation}\label{eq:stabilityth}
\sup_{0 \leq t \leq T}\|\ua-u\|_2\leq C_3
(\sqrt{\alpha}+\|\ua_0-u_0\|_2^{2})^{\displaystyle{\frac{\mathrm{e}^{-C_1T}}{2}}},
\end{equation}
provided that $\sqrt{\alpha}+\|\ua_0-u_0\|_2^{2}\leq C_2$. 
\end{thm}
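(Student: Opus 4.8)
The plan is to run the classical Yudovich uniqueness scheme on the difference $w:=\ua-u$, treating the Voigt regularization $-\alpha\Delta\partial_t\ua$ as a perturbation responsible for the $\sqrt\alpha$ contribution. Subtracting \eqref{eq:e} from \eqref{eq:ev} and writing the convective difference as $\ua\cdot\nabla\ua-u\cdot\nabla u=\ua\cdot\nabla w+w\cdot\nabla u$, I would test the resulting equation against $w$ over $\T^2$. Since both $\ua$ and $u$ are divergence-free, the pressure term and the transport term $\int(\ua\cdot\nabla w)\cdot w$ vanish, and an integration by parts turns the Voigt term into $\alpha\int\nabla\partial_t\ua:\nabla w$. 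Splitting $\nabla w=\nabla\ua-\nabla u$, this leaves
\[
\frac12\frac{d}{dt}\|w\|_2^2+\frac\alpha2\frac{d}{dt}\|\nabla\ua\|_2^2=\alpha\int_{\T^2}\nabla\partial_t\ua:\nabla u\,dx-\int_{\T^2}(w\cdot\nabla u)\cdot w\,dx.
\]

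Next I would integrate in time and control the $\alpha$-dependent terms. The quantity $\tfrac\alpha2\|\nabla\ua(t)\|_2^2$ is nonnegative and can be dropped from the left-hand side, while $\tfrac\alpha2\|\nabla\ua_0\|_2^2=\tfrac\alpha2\|\omega_0^\alpha\|_2^2=O(\alpha)$ by \eqref{eq:aid2}. For the cross term $J:=\alpha\int_0^t\int\nabla\partial_s\ua:\nabla u$, I would integrate by parts in time and then in space, replacing $\partial_s u$ by $-P(u\cdot\nabla u)$ via the Euler equation (the pressure contribution drops by incompressibility of $\ua$), producing boundary terms together with $-\alpha\int_0^t\int\Delta\ua\cdot(u\cdot\nabla u)$. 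Using the conserved quantities recalled in the excerpt — which under \eqref{eq:aid2} yield the uniform bounds $\|\ua(t)\|_{H^1}\le C$, $\|\omega^\alpha(t)\|_2\le C$ and $\sqrt\alpha\|\nabla\omega^\alpha(t)\|_2\le C$ — together with the identity $\|\Delta\ua\|_2=\|\nabla\omega^\alpha\|_2$ and the fact that the Yudovich field satisfies $\|u\|_\infty\le C$ and $\|\omega(t)\|_2=\|\omega_0\|_2$, each piece of $J$ is seen to be $O(\sqrt\alpha)$. Thus, setting $A:=\sqrt\alpha+\|\ua_0-u_0\|_2^2$,
\[
\|w(t)\|_2^2\le CA+2\int_0^t\Big|\int_{\T^2}(w\cdot\nabla u)\cdot w\,dx\Big|\,ds.
\]

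For the remaining nonlinear term I would use the Yudovich estimate $\|\nabla u\|_p\le Cp\|\omega_0\|_\infty$ (Biot–Savart together with the linear-in-$p$ growth of Calderón–Zygmund bounds, since $\|\omega(t)\|_\infty=\|\omega_0\|_\infty$) to get $\int|w|^2|\nabla u|\le Cp\|\omega_0\|_\infty\|w\|_{2p'}^2$. Crucially, since the excerpt warns that no uniform $L^p$ bound on $\omega^\alpha$ is available for $p\neq2$, I cannot bound $w$ in $L^\infty$; I would instead only use the uniform estimate $\|w\|_q\le M\sqrt q$ for all $q\ge2$, which follows from $\|\ua\|_{H^1}\le C$ via the two-dimensional inequality $\|f\|_q\le C\sqrt q\,\|f\|_{H^1}$ and from $\|u\|_\infty\le C$. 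Interpolating $\|w\|_{2p'}$ between $L^2$ and a fixed higher Lebesgue space (note $2p'\to2^+$ as $p\to\infty$, so the interpolation exponent on $\|w\|_2$ is $1-O(1/p)$) and then optimizing in $p\sim\log(M^2/\|w\|_2^2)$ yields the logarithmic bound $\int|w|^2|\nabla u|\le C_1\|w\|_2^2\log\big(M^2/\|w\|_2^2\big)$, valid once $\|w\|_2^2$ is small.

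Finally, writing $f(t)=\|w(t)\|_2^2$, I would arrive at the integral inequality $f(t)\le CA+C_1\int_0^t f\log(M^2/f)\,ds$, whose modulus $r\mapsto r\log(M^2/r)$ satisfies the Osgood condition. Solving it gives $\log(M^2/f(t))\ge e^{-C_1t}\log(M^2/(CA))$, i.e. $f(t)\le M^2(CA/M^2)^{e^{-C_1t}}$, and since $A\le C_2<1$ (the smallness hypothesis, which also guarantees $p\ge2$ and $f<M^2$ throughout) the exponent may be replaced by $e^{-C_1T}$; taking square roots produces exactly \eqref{eq:stabilityth}. I expect the main obstacle to be the perturbative estimate of $J$: unlike in the standard two-solution uniqueness proof there is a genuine Voigt forcing, and extracting precisely the rate $\sqrt\alpha$ (rather than a worse power) requires the interplay of time integration, the Euler equation for $\partial_t u$, and the sharp use of the two conserved quantities, while the absence of a uniform $\omega^\alpha$ bound in $L^p$, $p\neq2$, forces the Yudovich step to run with only the weaker control $\|w\|_q\le M\sqrt q$.
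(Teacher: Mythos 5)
Your proof is correct and takes essentially the same route as the paper: the identical $L^2$ energy estimate on $w=\ua-u$, the identical perturbative treatment of the Voigt term (moving the time derivative onto $u$ and invoking the Euler equation so that the key contribution becomes $-\alpha\int_0^t\int (u\cdot\nabla u)\cdot\Delta\ua\,dx\,d\tau$, which is $O(\sqrt{\alpha})$ thanks to $\sqrt{\alpha}\,\|\nabla\oa\|_2\leq C$ from Proposition \ref{prop:unif}), and the identical Yudovich--Osgood closure based on $\|\nabla u\|_p\lesssim p\,\|\omega_0\|_\infty$, yielding the modulus $y\log(1/y)$ and the exponent $\mathrm{e}^{-C_1T}$. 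Your two deviations --- integrating by parts in time (the resulting $O(\alpha)$ boundary terms are harmless given the uniform $H^1$ bounds) and optimizing in $p$ pointwise to obtain the logarithmic bound before applying Osgood, rather than inserting the time-dependent choice $2p(\tau)=2-\ln y_\alpha(\tau)$ into the integral inequality as the paper does following Chemin --- are equivalent reorderings of the same steps.
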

The rate of convergence \eqref{eq:stabilityth} is the analogous to the
one obtained by Chemin in \cite{C96} for the vanishing viscosity
limit. Similar results are also true for other approximation schemes,
such as the $\alpha$-Euler \cite{ACS24} and the Fourier-Galerkin
method \cite{BS24}. 

On the other hand, the situation differs, particularly when compared
to the case of $\alpha$-Euler, if one is interested in obtaining
rates for
the convergence of the vorticity \eqref{eq:convoth}. Indeed, for
$\alpha$-Euler, it is possible to give an explicit rate for the
convergence \eqref{eq:convoth} provided that $\omega_0\in
L^{\infty}(\T^{2})\cap B^{s}_{p,\infty}(\T^2)$, with $s>0$ and $p\geq
1$, see \cite{ACS24}. The result in \cite{ACS24} is the analogous of
the results obtained for the vanishing-viscosity approximation in
\cite{CDE22,CCS21}. The crucial point is that again some transport structure
is preserved in the approximating vorticity in the $\alpha$-Euler.

On the contrary --at present-- for the Euler-Voigt equations
\eqref{eq:ev}, even if we consider
$\omega_0\in L^{\infty}(\T^{2})\cap B^{s}_{p,\infty}(\T^2)$, we are
not able to quantify the strong convergence \eqref{eq:convoth}. In the
next theorem, we show that a rate of convergence for
\eqref{eq:convoth} can be proved, if we consider more regular initial
data.
\begin{thm}\label{teo:strong}
Let $s>2$ and $u_0\in H^{s}(\T^2)$ be a zero-average divergence-free vector field and let $T>0$, arbitrary and finite. Let $u$ be the unique solution of \eqref{eq:e} in $C([0,T];H^{s}(\T^{2}))$. Let $\{\ua_0\}_{\alpha}$ satisfy \eqref{eq:aid1} and \eqref{eq:aid2}. Then, 
\begin{equation}\label{eq:convuths}
\sup_{0 \leq t \leq T}\|\ua-u\|_2\lesssim (\alpha+\|\ua_0-u_0\|_2^{2})^{\frac{1}{2}}.
\end{equation}
Moreover, if $s\geq 3$ 
\begin{equation}\label{eq:convuths1}
\sup_{0 \leq t \leq T}\|\omega^{\alpha}-\omega\|_2\lesssim (\alpha+\|\omega^{\alpha}_0-\omega_0\|_2^{2}+\alpha\|\nabla\omega^{\alpha}_0-\nabla\omega_0\|_2^2)^{\frac{1}{2}},
\end{equation}
while if $2<s<3$ 
\begin{equation}\label{eq:convuths2}
\sup_{0 \leq t \leq T}\|\omega^{\alpha}-\omega\|_2\lesssim (\alpha^{\frac{s-1}{2}}+\|\omega^{\alpha}_0-\omega_0\|_2^{2}+\alpha\|\nabla\omega^{\alpha}_0-\nabla\omega_0\|_2^2)^{\frac{1}{2}}.
\end{equation}
\end{thm}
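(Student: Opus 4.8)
The plan is to run energy estimates for the differences $w:=\ua-u$ and $\eta:=\oa-\omega$ in the Voigt-adapted quantities $E:=\|w\|_2^2+\alpha\|\nabla w\|_2^2$ and $F:=\|\eta\|_2^2+\alpha\|\nabla\eta\|_2^2$, i.e.\ exactly the combinations on which the dispersive operator $-\alpha\lapl\partial_t$ acts monotonically. Since $u$ is the smooth solution, $s>2$ yields $\nabla u\in C([0,T];L^\infty)$ and $\partial_t u\in C([0,T];H^{s-1})$, and on $\T^2$ one has $\|\nabla v\|_2=\|\curl v\|_2$ and $\|v\|_2\lesssim\|\curl v\|_2$ for zero-average divergence-free $v$; thus $\|\eta\|_2=\|\nabla w\|_2$, $\|\ua_0-u_0\|_2\lesssim\|\omega_0^\alpha-\omega_0\|_2$, and $F(0)=\|\omega_0^\alpha-\omega_0\|_2^2+\alpha\|\nabla\omega_0^\alpha-\nabla\omega_0\|_2^2$. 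For \eqref{eq:convuths} I subtract \eqref{eq:e} from \eqref{eq:ev} and test with $w$: the pressure drops by incompressibility, the convective difference $\ua\cdot\nabla w+w\cdot\nabla u$ has a skew first part and a second part bounded by $\|\nabla u\|_\infty\|w\|_2^2$, while the dispersive terms give $\tfrac12\tfrac{d}{dt}E$ plus the forcing $\alpha\langle\lapl\partial_t u,w\rangle=-\alpha\langle\nabla\partial_t u,\nabla w\rangle\le\tfrac12 E+\tfrac{\alpha}{2}\|\nabla\partial_t u\|_2^2$, the last factor being bounded. Gronwall gives $E(t)\lesssim E(0)+\alpha\lesssim\|\ua_0-u_0\|_2^2+\alpha$, hence \eqref{eq:convuths}.

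For the vorticity I subtract the two vorticity equations, getting $\partial_t\eta-\alpha\lapl\partial_t\eta+\ua\cdot\nabla\eta+w\cdot\nabla\omega=\alpha\lapl\partial_t\omega$, and test with $\eta$; the transport term is skew, so $\tfrac12\tfrac{d}{dt}F=-\langle w\cdot\nabla\omega,\eta\rangle+\alpha\langle\lapl\partial_t\omega,\eta\rangle$. The first term is controlled through the velocity bound: for $s\ge3$ by $\|\nabla\omega\|_\infty\|w\|_2\|\eta\|_2$, and for $2<s<3$ by Hölder with $\omega\in H^{s-1}\hookrightarrow W^{1,2/(3-s)}$ together with Gagliardo--Nirenberg for $\|w\|_{L^p}$; in both cases it contributes $\|w\|_2^2\lesssim\alpha+\|\omega_0^\alpha-\omega_0\|_2^2$ to $F$ after Young, which is admissible for \eqref{eq:convuths1} and \eqref{eq:convuths2}. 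The decisive term is the dispersive forcing $\alpha\langle\lapl\partial_t\omega,\eta\rangle$. When $s\ge3$ one has $\partial_t\omega=-u\cdot\nabla\omega\in H^{s-2}\subset H^1$, so $\alpha\langle\lapl\partial_t\omega,\eta\rangle=-\alpha\langle\nabla\partial_t\omega,\nabla\eta\rangle\le\tfrac12 F+\tfrac{\alpha}{2}\|\nabla\partial_t\omega\|_2^2$ with $\|\nabla\partial_t\omega\|_2$ bounded, and Gronwall yields $F\lesssim F(0)+\alpha$, which is \eqref{eq:convuths1}.

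When $2<s<3$ the forcing is the heart of the matter: now $\partial_t\omega$ lies only in $H^{s-2}$ with $s-2<1$, so the gradient in $-\alpha\langle\nabla\partial_t\omega,\nabla\eta\rangle$ is no longer licit (it would require $\eta\in\dot H^{4-s}$ with $4-s>1$, whereas $\eta$ is controlled only up to $\dot H^1$, where $\|\nabla\eta\|_2\sim\alpha^{-1/2}$). The exponent $(s-1)/2$ is precisely the one dictated by interpolating $\eta$ between $L^2$ and $\dot H^1$ at order $3-s\in(0,1)$, and it can be extracted by a Littlewood--Paley splitting $\omega=\omega_{\le N}+\omega_{>N}$ at the scale $N\sim\alpha^{-1/4}$. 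On the low part $\partial_t\omega_{\le N}$ is smooth, $\|\nabla\partial_t\omega_{\le N}\|_2\lesssim N^{3-s}$, so the $s\ge3$ argument applies and, with this choice of $N$, $\alpha\langle\lapl\partial_t\omega_{\le N},\eta\rangle\lesssim\alpha^{1/2}N^{3-s}\sqrt F=\alpha^{(s-1)/4}\sqrt F\le\tfrac14 F+C\alpha^{(s-1)/2}$, producing exactly the stated rate. On the high part the Euler vorticity is already small, $\|\omega_{>N}\|_2\lesssim N^{-(s-1)}\sim\alpha^{(s-1)/4}$, of a size compatible with $\sqrt F$.

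The hard part will be to treat the high-frequency forcing $\alpha\langle\lapl\partial_t\omega_{>N},\eta\rangle$ without destroying the rate: here $\partial_t\omega_{>N}$ is too rough to carry a gradient, and the natural remedy---integrating by parts in time, or equivalently comparing $\oa$ with the filtered Euler vorticity $\overline\omega=(\idnt-\alpha\lapl)^{-1}\omega$ (for which $\|\omega-\overline\omega\|_2\lesssim\alpha^{(s-1)/2}$)---produces a remainder in which the small filter error is paired against the nonlinearity $\ua\cdot\nabla\oa$, whose only uniform control is the weighted bound $\sqrt\alpha\,\|\nabla\oa\|_2\lesssim1$. A careless estimate of this coupling degrades $\alpha^{(s-1)/2}$ to $\alpha^{s-2}$, which fails to converge as $s\to2^+$, so the delicate point is to combine the frequency localization at $N\sim\alpha^{-1/4}$ with the weight $\sqrt\alpha$ to keep the remainder at size $\alpha^{(s-1)/2}$; balancing the low- and high-frequency contributions then closes the Gronwall inequality for $F$ and gives \eqref{eq:convuths2}. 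At $s=3$ the two bounds $\alpha$ and $\alpha^{(s-1)/2}$ coincide, consistently with \eqref{eq:convuths1}.
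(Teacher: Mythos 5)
Your Step 1 and your $s\ge 3$ argument are correct and essentially coincide with the paper's proof: the paper also reduces the dispersive forcing to $\alpha\,\Delta(u\cdot\nabla\omega)$ (i.e.\ $-\alpha\Delta\partial_t\omega$ moved to the right-hand side) and estimates exactly your two terms $\alpha\int|\nabla u||\nabla\omega||\nabla\eta|$ and $\alpha\int|u||D^2\omega||\nabla\eta|$ after one integration by parts. One small slip: at $s=3$ exactly, $\nabla\omega\in H^{s-2}=H^{1}(\T^2)\not\subset L^{\infty}(\T^2)$, so your bound $\|\nabla\omega\|_{\infty}\|w\|_2\|\eta\|_2$ for the transport forcing is not licit at the endpoint; replace it, as the paper does, by H\"older in the form $\|w\|_4\|\nabla\omega\|_4\|\eta\|_2$ together with the Ladyzhenskaya inequality \eqref{eq:lad}. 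This is cosmetic and easily repaired.

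The case $2<s<3$, however, contains a genuine gap, and you identify it yourself: the high-frequency dispersive forcing $\alpha\langle\Delta\partial_t\omega_{>N},\eta\rangle$ is never estimated, and within your scheme it cannot be. Since $\partial_t\omega=-u\cdot\nabla\omega$ lies only in $H^{s-2}$ with $s-2<1$, the tail $\partial_t\omega_{>N}$ need not belong to $H^1$ (its $H^1$ norm can be infinite), so no integration by parts in space is available, while pairing against $\Delta\eta$ requires two derivatives of $\oa$, which are not uniformly controlled; and integrating by parts in time, as you note, couples the filter error to $\ua\cdot\nabla\oa$, whose only uniform control is $\sqrt{\alpha}\|\nabla\oa\|_2\lesssim 1$, degrading the rate. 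The paper's resolution is structurally different and avoids the problem entirely: instead of frequency-splitting the solution $\omega$ inside the Voigt energy estimate, it truncates the \emph{initial datum}, $\un_0=\sum_{0<|k|\le N}\widehat{u_0}_k\,\mathrm{e}^{ik\cdot x}$, and introduces the auxiliary \emph{Euler solution} $\un$ with this datum. By \eqref{eq:nest} and the growth estimate \eqref{eq:dexp} one gets $\sup_t\|\un\|_{s,2}\lesssim \Cs$ uniformly in $N$ (see \eqref{eq:sn}) and hence $\sup_t\|\un\|_{3,2}\lesssim N^{3-s}$. Then $\oa-\omn$ is estimated by your $s\ge 3$ machinery with $u$ replaced by the smooth field $\un$ — every dispersive term now legitimately carries a gradient, at cost $\alpha N^{2(3-s)}$ — while $\omn-\omega$ is estimated by a purely transport (Euler--Euler, $\alpha$-free) stability argument via H\"older with exponents $2/(s-2)$, $2/(3-s)$ and Gagliardo--Nirenberg \eqref{eq:gs}, yielding $N^{-2(s-1)}$ as in \eqref{eq:fin}. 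The rough high frequencies thus only ever enter an estimate where the advection term is skew and never meet the operator $\alpha\Delta\partial_t$. Your scale $N\sim\alpha^{-1/4}$ and the balance $\alpha N^{2(3-s)}\sim N^{-2(s-1)}\sim\alpha^{(s-1)/2}$ are exactly the paper's; the missing idea is that the low-frequency object must itself solve the Euler equations (truncate the datum, not the solution), because a Littlewood--Paley projection of $\omega$ solves no transport equation and leaves behind precisely the high-tail forcing you could not close.
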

We note that the case $s\geq 3$ in Theorem \ref{teo:strong} is the
two-dimensional analog of the convergence result in \cite[Theorem
5.1]{LaT10} and we recover the same rate of convergence. In addition,
extending \cite[Remark 2]{LaT10}, in Theorem \ref{teo:strong} we also
consider the case $2<s<3$, namely the full range of $s\in\R$ such that
for $\gamma\in(0,1)$the embedding of $H^{s}(\T^2)\subset C^{1,\gamma}(\T^{2})$ holds. In this case, an additional regularization
and a careful estimate on the growth of the higher-order norms of
the solution of \eqref{eq:e} are needed, and this explains the
difference between the rates in \eqref{eq:convuths1} and in
\eqref{eq:convuths2}.  
\subsection*{Organization of the paper} In Section \ref{sec:prelim},
we introduce the notations used throughout the paper, we recall some
classical results on the Euler equations, and we state the necessary
ones on the Euler-Voigt equations. Then in the subsequent Sections
\ref{sec:3},~\ref{sec:4}, and \ref{sec:5} we give the proofs of Theorems
\ref{teo:weak}, \ref{teo:yudovich}, and \ref{teo:strong}, respectively. 
\section{Preliminaries}\label{sec:prelim}
\subsection{Notations}
Throughout the paper, we consider as a domain the two-dimensional flat torus, which is defined as $\mathbb{T}^2 := \mathbb{R}^2/2\pi\mathbb{Z}^2$. In particular, if the domain of an integral is not explicitly stated, it is assumed to be the torus. The space $\mathcal{C}^\infty_c([0,T) \times \mathbb{T}^2)$ denotes the space of smooth functions with compact support in time and periodic in space. The standard Lebesgue and Sobolev spaces are denoted by $L^p(\mathbb{T}^2)$, $H^s(\mathbb{T}^2)$ with $s\in\R$, and $W^{k,p}(\mathbb{T}^2)$ with $k\in\N$ and $1\leq p\leq \infty$. We denote their norms with $\|\cdot\|_p$, $\|\cdot\|_{s,2}$, and $\|\cdot\|_{k,p}$, respectively. Moreover, we denote the scalar product in the Hilbert space $L^2(\mathbb{T}^2)$ as $(\cdot, \cdot)$. Given a Banach space $X$, the classical Bochner spaces are denoted by $L^p(0,T;X)$. Moreover, $C([0,T]; X_{w})$ denotes the space of continuous function with values in $X$ endowed with the weak topology. Finally, we adopt the notation $\lesssim$, meaning that the terms on the right-hand side are bounded up to a constant factor that is independent of $\alpha$. \\
\subsection{On the $2D$ Euler equations}
In this section we recall some of the results concerning the $2D$ Euler equations which will be used in the sequel. We start with the following one concerning the global well-posedness in Sobolev spaces. 
\begin{thm}\label{teo:grhse}
Let $s>2$ and $u_0\in H^{s}(\T^2)$ be a divergence-free vector field with zero average. Then, there exists a unique solution of \eqref{eq:e}
\begin{equation*}
u\in C([0,T];H^{s}(\T^2))\cap C^{1}([0,T];H^{s-1}(\T^2)).
\end{equation*}
In addition, 
\begin{equation}\label{eq:dexp}
\sup_{0 \leq t \leq T}\|u(t)\|_{s,2}\leq C\|u_{0}\|_{s,2}\exp\left((1+2\log^{+}\|u_0\|_{s,2})\,\mathrm{e}^{\,Ct\|\omega_0\|_{\infty}}-1\right),
\end{equation}
with $C>0$ depending only on $s$.
\end{thm}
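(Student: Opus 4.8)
The plan is to prove existence, uniqueness, and the quantitative bound \eqref{eq:dexp} by the classical route: establish local well-posedness in $H^s$, extract uniform-in-time a priori estimates, and then integrate a logarithmic differential inequality to reach a global double-exponential bound. For local existence I would regularize \eqref{eq:e}, e.g.\ by a Galerkin truncation or by adding an artificial viscosity $\nu\lapl u$ and letting $\nu\to 0^+$, producing smooth approximate solutions on a common short time interval; the a priori estimates below, being uniform in the regularization parameter, yield the compactness needed to pass to the limit and produce a solution in $C([0,T];H^s(\T^2))$. The embedding $H^s(\T^2)\subset C^{1,\gamma}(\T^2)$ for $s>2$ is what makes all the nonlinear estimates below meaningful.

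The core is the $H^s$ energy estimate. Applying $\Lambda^s:=(\idnt-\lapl)^{s/2}$ to the momentum equation and pairing with $\Lambda^s u$ in $L^2$, the pressure term drops by incompressibility, leaving the nonlinear contribution $(\Lambda^s(u\cdot\nabla u),\Lambda^s u)$. Splitting off the transport part,
\[
(\Lambda^s(u\cdot\nabla u),\Lambda^s u)=([\Lambda^s,u\cdot\nabla]u,\Lambda^s u)+(u\cdot\nabla\Lambda^s u,\Lambda^s u),
\]
the second summand vanishes because $\diver u=0$, while the Kato--Ponce commutator estimate bounds the first by $\|\nabla u\|_\infty\|u\|_{s,2}^2$. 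This gives
\[
\frac{d}{dt}\|u\|_{s,2}\lesssim \|\nabla u\|_\infty\,\|u\|_{s,2}.
\]

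The second ingredient is the conservation of the vorticity maximum: since $\omega$ solves the transport equation \eqref{eq:transpvort} with a divergence-free field, all $L^p$ norms are conserved, and in particular $\|\omega(t)\|_\infty=\|\omega_0\|_\infty$. Feeding this into the Beale--Kato--Majda logarithmic Sobolev inequality, valid for $s>2$,
\[
\|\nabla u\|_\infty\lesssim \|\omega\|_2+\|\omega\|_\infty\bigl(1+\log^{+}\|u\|_{s,2}\bigr),
\]
closes the estimate in terms of $\|u\|_{s,2}$ alone and produces, after absorbing the conserved $\|\omega_0\|_2\lesssim\|\omega_0\|_\infty$,
\[
\frac{d}{dt}\|u\|_{s,2}\lesssim \|u\|_{s,2}\Bigl(1+\|\omega_0\|_\infty\bigl(1+\log^{+}\|u\|_{s,2}\bigr)\Bigr).
\]

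The final and most delicate step is to integrate this inequality sharply. The key observation is that the substitution $v:=\log\|u\|_{s,2}$ linearizes it: for $\|u\|_{s,2}\ge 1$ one finds $v'\lesssim C\|\omega_0\|_\infty\,v+C$, a linear differential inequality whose Grönwall integration yields $v(t)\lesssim v(0)\,\mathrm{e}^{Ct\|\omega_0\|_\infty}+(\mathrm{e}^{Ct\|\omega_0\|_\infty}-1)$, i.e.\ precisely the double-exponential growth of \eqref{eq:dexp} after a careful bookkeeping of the constants (this is where the factors $1+2\log^+\|u_0\|_{s,2}$ arise). The hard part is exactly this sharp integration; the remaining conclusions are routine. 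Global existence follows since the a priori bound prevents finite-time blow-up of $\|u\|_{s,2}$ (equivalently, the BKM criterion $\int_0^T\|\omega\|_\infty\,dt=T\|\omega_0\|_\infty<\infty$ holds). Uniqueness follows from an $L^2$ energy estimate on the difference of two solutions, controlled by $\|\nabla u\|_\infty\in L^1(0,T)$. Finally, writing $\partial_t u=-\mathbb{P}(u\cdot\nabla u)$ with $\mathbb{P}$ the Leray projector and noting $u\cdot\nabla u\in H^{s-1}$ gives $u\in C^1([0,T];H^{s-1}(\T^2))$.
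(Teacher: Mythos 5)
Your proposal is essentially correct, but note that the paper does not prove Theorem \ref{teo:grhse} at all: well-posedness is delegated to Kato \cite{K86} and the double-exponential bound \eqref{eq:dexp} to Denisov \cite{D15}. Your route --- Kato--Ponce commutator estimate for the $H^s$ energy inequality, conservation of $\|\omega\|_\infty$ from the transport structure \eqref{eq:transpvort}, the Beale--Kato--Majda logarithmic inequality, and the substitution $v=\log\|u\|_{s,2}$ turning the logarithmic differential inequality into a linear one --- is precisely the classical argument underlying those citations, so in substance you have reconstructed the outsourced proof rather than diverged from it; what your write-up buys is a self-contained account making visible where the exponent $\mathrm{e}^{Ct\|\omega_0\|_\infty}$ and the factor $1+2\log^+\|u_0\|_{s,2}$ come from. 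Two points you gloss over deserve a word. First, continuity in the top norm $C([0,T];H^s)$ does not follow from the compactness of the regularization alone (that only gives $C([0,T];H^{s}_{w})$ plus an $L^\infty(0,T;H^s)$ bound); one needs the standard extra step of weak continuity combined with continuity of $t\mapsto\|u(t)\|_{s,2}$, or Kato's time-reversibility argument. Second, your linearized inequality is stated as $v'\lesssim C\|\omega_0\|_\infty v + C$ with a bare constant term; to recover \eqref{eq:dexp} exactly, with the exponent $\mathrm{e}^{Ct\|\omega_0\|_\infty}$ rather than the weaker $\mathrm{e}^{Ct(1+\|\omega_0\|_\infty)}$, the constant term must itself carry the factor $\|\omega_0\|_\infty$ --- which your displayed BKM inequality in fact provides, since the low-frequency contribution is $\|\omega\|_2\lesssim\|\omega_0\|_\infty$ on the fixed-volume torus and not a pure constant, so that one integrates $v'\lesssim\|\omega_0\|_\infty(1+v)$; it is worth making this bookkeeping explicit, as it is the only place where the precise form of \eqref{eq:dexp} (with $C=C(s)$ only) could be lost.
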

We remark that since we are on the torus the pressure can be recovered by solving the associated elliptic problem. 
The proof of Theorem \ref{teo:grhse} can be found in \cite{K86}, while the inequality \eqref{eq:dexp} has been proved in \cite{D15}. 
We also consider solutions in weaker regularity settings. 
\begin{definition}
	\label{def:weaksol}
A divergence-free vector field $u \in C([0,T];L_{w}^2(\mathbb{T}^2))$ is a weak solution of \eqref{eq:e}, if  for any $\phi \in H^s({\mathbb{T}}^2)$ with $\diver\phi = 0$ and $s>2$, it holds
 \begin{equation*}
    (u(t),\phi)-(u_0,\phi)= \int_{0}^t (u(\tau) \cdot \nabla\phi, u(\tau))\,d\tau, \quad \mbox{ for any }t \in [0,T).
 \end{equation*}
 If in addition the vorticity $\omega\in L^{\infty}([0,T]\times\T^2)$, then $u$ is called a Yudovich weak solution. 
\end{definition}
In the next proposition, we summarize the results we need in the sequel concerning Yudovich solutions. 
\begin{prop}\label{prop:yud}
Let $u_0\in L^{2}(\T^2)$ be a zero-average vector field such that $\omega_0\in L^{\infty}(\T^{2})$. Then, there exists a unique Yudovich weak solution of \eqref{eq:e}. Moreover, 
\begin{itemize}
\item $u\in C([0,T];L^{2}(\T^2))$ and for any $t\in[0,T]$ 
\begin{equation}\label{eq:energy}
\|u(t)\|_{2}=\|u_0\|_2. 
\end{equation}
\item $\omega\in C([0,T];L^{2}(\T^{2}))$ and for any $t\in[0,T]$ 
\begin{equation}\label{eq:enstrophy}
\|\omega(t)\|_{2}=\|\omega_0\|_{2}.
\end{equation}
\item $u\in \mathcal{C}(0,T;W^{1,p}(\T^{2}))$ for any $1<p<\infty$, and for any $p>2$ 
\begin{equation}\label{eq:cz}
\|\nabla\,u(t)\|_{p}\lesssim p\,\|\omega(t)\|_{\infty}\quad\forall\, t\in[0,T]. 
\end{equation}
\end{itemize}
\end{prop}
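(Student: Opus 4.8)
The plan is to recover the velocity from the vorticity and extract the Calderón–Zygmund bound \eqref{eq:cz}, then to construct a solution by approximation and pass the conservation laws to the limit, to upgrade weak time-continuity to strong continuity via conservation of the $L^2$ norms, and finally to prove uniqueness by Yudovich's energy estimate combined with an Osgood argument, which I expect to be the delicate part.

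First I would reconstruct the velocity through the Biot–Savart law on the torus: writing $u=\nabla^{\perp}\psi$ with $-\lapl\psi=\omega$ and $\int_{\T^2}\psi=0$, one has $\nabla u=\nabla\nabla^{\perp}(-\lapl)^{-1}\omega$, which is a singular integral operator of Calderón–Zygmund type acting on $\omega$. The key fact is that the $L^p\to L^p$ operator norm of such operators grows linearly in $p$ as $p\to\infty$; hence $\|\nabla u\|_p\lesssim p\,\|\omega\|_p$, and since $\T^2$ has finite measure $\|\omega\|_p\leq C\|\omega\|_\infty$, which yields \eqref{eq:cz}. The same bound, valid for every finite $p$, shows $u\in W^{1,p}(\T^2)$ for all $1<p<\infty$ and, by Morrey's embedding, $u\in L^\infty(\T^2)$ with norm controlled by $\|\omega\|_\infty$; this is precisely what will make the nonlinear estimate in the uniqueness step work.

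For existence I would mollify the initial vorticity to obtain $\omega_0^\epsilon\in C^\infty(\T^2)$ and solve \eqref{eq:e} via Theorem \ref{teo:grhse}, producing smooth solutions $u^\epsilon$ on $[0,T]$. Because the smooth vorticity solves the transport equation \eqref{eq:transpvort} with a divergence-free field, every $L^p$ norm is conserved: in particular $\|\omega^\epsilon(t)\|_\infty=\|\omega_0^\epsilon\|_\infty\leq\|\omega_0\|_\infty$ and $\|\omega^\epsilon(t)\|_2=\|\omega_0^\epsilon\|_2$, giving uniform bounds in $L^\infty(0,T;L^2\cap L^\infty)$ for the vorticity and in $L^\infty(0,T;W^{1,p}\cap L^\infty)$ for the velocity. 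Using $\partial_t\omega^\epsilon=-\diver(u^\epsilon\omega^\epsilon)$ to control the time derivative in a negative Sobolev space, an Aubin–Lions argument yields strong $L^2$ compactness of $u^\epsilon$ and weak-$*$ compactness of $\omega^\epsilon$, and one passes to the limit in the weak formulation of Definition \ref{def:weaksol}. The limiting field is $W^{1,p}$-regular and divergence-free, so DiPerna–Lions theory applies to \eqref{eq:transpvort} and the limit vorticity is renormalized; this transfers conservation of all $L^p$ norms to the limit, giving \eqref{eq:enstrophy} together with $\|\omega(t)\|_\infty=\|\omega_0\|_\infty$, and the energy identity then gives \eqref{eq:energy}. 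Since $\partial_t u$ and $\partial_t\omega$ live in negative Sobolev spaces, one gets $u,\omega\in C([0,T];L^2_w)$; combined with the constancy of $\|u(t)\|_2$ and $\|\omega(t)\|_2$, and using that in a Hilbert space weak continuity together with continuity of the norm implies strong continuity, this upgrades to $u,\omega\in C([0,T];L^2(\T^2))$.

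Finally, uniqueness is the main obstacle. Given two Yudovich solutions $u_1,u_2$ with the same datum, set $w=u_1-u_2$, so that $\partial_t w+u_1\cdot\nabla w+w\cdot\nabla u_2=-\nabla q$ with $\diver w=0$. Testing with $w$ annihilates the transport and pressure terms and leaves $\tfrac12\tfrac{d}{dt}\|w\|_2^2=-\int (w\cdot\nabla u_2)\cdot w$, which I would bound by Hölder as $\|\nabla u_2\|_p\,\|w\|_{2p'}^2$ with $1/p+1/p'=1$. Interpolating $\|w\|_{2p'}$ between $L^2$ and the uniform $L^\infty$ bound on $w$ and inserting \eqref{eq:cz} gives, with $\varepsilon(t)=\|w(t)\|_2^2$, an inequality of the form $\varepsilon'\lesssim p\,\varepsilon^{1-1/p}$. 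The Yudovich trick is to choose $p\sim\log(1/\varepsilon)$, producing the Osgood differential inequality $\varepsilon'\lesssim\varepsilon\log(1/\varepsilon)$; since $\varepsilon(0)=0$ and $\int_{0}\frac{ds}{s\log(1/s)}=+\infty$, Osgood's lemma forces $\varepsilon\equiv0$, i.e.\ $u_1=u_2$. The delicate points are the uniform-in-$p$ constant in the Calderón–Zygmund bound, so that the linear growth in $p$ is genuine, and the justification of the energy identity for the difference $w$ at the available regularity, which relies on $\nabla u_2\in L^p$ with the stated growth and on $w\in L^\infty$.
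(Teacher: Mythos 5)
The paper does not actually prove Proposition \ref{prop:yud}: it defers entirely to Yudovich \cite{Y63} and the classical references \cite{MB02, MP94}, and your sketch is precisely the standard argument contained there (Biot--Savart plus Calder\'on--Zygmund bounds with linear-in-$p$ constants, mollification and Aubin--Lions compactness for existence, DiPerna--Lions renormalization to transfer conservation of the $L^p$ norms, weak continuity plus conserved norms to upgrade to strong $L^2$ continuity, and the Yudovich interpolation/Osgood estimate for uniqueness), so your proposal is correct and consistent with the paper. The only point you leave implicit is the claimed time continuity $u\in \mathcal{C}(0,T;W^{1,p}(\T^2))$ for $1<p<\infty$, which however follows readily from what you establish: $\omega\in C([0,T];L^{2})$ together with the uniform bound $\|\omega(t)\|_{\infty}\leq\|\omega_0\|_{\infty}$ gives $\omega\in C([0,T];L^{p})$ by interpolation (and by the embedding $L^{2}\subset L^{p}$ on the torus for $p<2$), and the $L^p$-boundedness of the Calder\'on--Zygmund operator then yields continuity of $t\mapsto\nabla u(t)$ in $L^p$.
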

The results stated in Proposition \ref{prop:yud} are due to Yudovich \cite{Y63} and the proof can also and be found in the classical references \cite{MB02, MP94}. 
\subsection{On the $2D$ Euler-Voigt equations}
In this section we recall the global well-posedness of the system \eqref{eq:ev} and we prove the uniform-in-$\alpha$ estimates we need. We start with the following global well-posedness result in higher-order Sobolev spaces which holds for any fixed $\alpha>0$. 
\begin{prop}\label{prop:regv}
Let $\alpha>0$ and $u^{\alpha}_0\in C^{\infty}(\T^{2})$ be a zero average divergence-free vector field. Then, there exists a unique solution $u^{\alpha}$ of \eqref{eq:ev} satisfying 
\begin{equation*}
u^{\alpha}\in C([0,T];H^{s}(\T^2))\cap C^{1}([0,T];H^{s-1}(\T^2)),\mbox{ for any }s>2. 
\end{equation*}
\end{prop}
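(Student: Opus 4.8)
The plan is to treat \eqref{eq:ev}, for fixed $\alpha>0$, as an ordinary differential equation in the Banach space $H^s(\T^2)$ and to exploit the smoothing provided by the Voigt regularization. First I would apply the Leray projection $P$ onto divergence-free fields to eliminate the pressure and invert the operator $\idnt-\alpha\lapl$, which is elliptic and boundedly invertible for fixed $\alpha$, rewriting the system as
\begin{equation*}
\partial_t\ua=F_\alpha(\ua),\qquad F_\alpha(v):=-(\idnt-\alpha\lapl)^{-1}P(v\cdot\nabla v).
\end{equation*}
Since $(\idnt-\alpha\lapl)^{-1}$ gains two spatial derivatives and $H^s(\T^2)$ is a Banach algebra for $s>2$, the map $F_\alpha$ sends $H^s$ into $H^{s+1}$ and is locally Lipschitz from $H^s$ to $H^s$. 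Hence the Cauchy–Lipschitz theorem in $H^s$ yields a unique maximal solution $\ua\in C([0,T^*);H^s(\T^2))$, together with the continuation criterion that $T^*<\infty$ forces $\|\ua(t)\|_{s,2}\to\infty$ as $t\to T^*$. This already delivers local existence and uniqueness, so the remaining task is purely to extend the solution globally.

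The core of the argument is therefore a global-in-time a priori bound on $\|\ua(t)\|_{s,2}$ ruling out finite-time blow-up. I would first record the two conservation laws special to the Voigt model: testing the equation with $\ua$, and the vorticity equation \eqref{eq:transpvortalpha} with $\oa$ (the transport terms vanishing because $\ua$ is divergence-free), gives, for all $t$,
\begin{equation*}
\|\ua\|_2^2+\alpha\|\nabla\ua\|_2^2=\mathrm{const},\qquad \|\oa\|_2^2+\alpha\|\nabla\oa\|_2^2=\mathrm{const}.
\end{equation*}
In particular, for fixed $\alpha$ the enstrophy identity bounds $\|\oa(t)\|_{H^1}$ uniformly in time. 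Next I would apply $\Lambda^s=(\idnt-\lapl)^{s/2}$ to the equation and pair with $\Lambda^s\ua$; the Voigt term reconstructs $\tfrac12\tfrac{d}{dt}E_s$ with $E_s:=\|\ua\|_{s,2}^2+\alpha\|\ua\|_{s+1,2}^2$, and after a Kato–Ponce commutator estimate, the top-order transport contribution again cancelling by incompressibility, the right-hand side is controlled by $\|\nabla\ua\|_\infty\,\|\ua\|_{s,2}^2$. These manipulations can be justified rigorously by performing them first on a Fourier–Galerkin truncation and passing to the limit.

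The main obstacle is precisely this supercritical factor $\|\nabla\ua\|_\infty$: a naive Sobolev bound $\|\nabla\ua\|_\infty\lesssim\|\ua\|_{s+1,2}$ only closes into a super-linear differential inequality $\tfrac{d}{dt}E_s\lesssim E_s^{3/2}$, whose solutions may blow up in finite time. To overcome this I would invoke a logarithmic Sobolev (Brezis–Gallouet) inequality, estimating
\begin{equation*}
\|\nabla\ua\|_\infty\lesssim\|\oa\|_{H^1}\bigl(1+\log^+\|\ua\|_{s+1,2}\bigr)^{1/2},
\end{equation*}
and using that $\|\oa\|_{H^1}$ is globally bounded by the enstrophy identity. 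This turns the energy estimate into $\tfrac{d}{dt}E_s\lesssim(1+\log^+E_s)^{1/2}E_s$ with an $\alpha$-dependent constant, whose solutions grow at most like a double exponential and hence remain finite on every bounded interval.

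By the continuation criterion this yields $\ua\in C([0,T];H^s(\T^2))$ for every $T>0$ and every $s>2$. The regularity $\ua\in C^1([0,T];H^{s-1}(\T^2))$ then follows by reading off $\partial_t\ua=F_\alpha(\ua)\in H^{s+1}\subset H^{s-1}$ together with the continuity of $F_\alpha$, while uniqueness is already supplied by the Cauchy–Lipschitz framework (equivalently, by an $L^2$-type estimate on the difference of two solutions, where the Voigt term again furnishes the needed control).
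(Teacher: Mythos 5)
Your proposal is correct, and while its skeleton (local well-posedness plus a global a priori bound, with the Voigt term producing the weighted quantity $E_s=\|\ua\|_{s,2}^2+\alpha\|\ua\|_{s+1,2}^2$ and a commutator estimate leaving the critical factor $\|\nabla\ua\|_{\infty}$) matches the paper, the way you close the estimate is genuinely different. The paper's proof is only a sketch of the a priori bounds (the local theory is delegated to the continuity argument of \cite{CLT06}, which you instead make explicit via Picard iteration for $\partial_t\ua=-(\idnt-\alpha\lapl)^{-1}P(\ua\cdot\nabla\ua)$ in $H^s$ --- note this actually yields $\ua\in C^1([0,T];H^s)$, stronger than the stated $C^1([0,T];H^{s-1})$). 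To control $\|\nabla\ua\|_{L^1(0,T;L^\infty)}$, the paper bootstraps at the $\Delta\ua$ level: testing the equation for $\Delta\ua$ with $\Delta\ua$, it bounds the cubic term $\int|D^2\ua|^2|\nabla\ua|\,dx$ via the Ladyzhenskaya inequality \eqref{eq:lad}, the key point being $\|D^2\ua\|_4^2\lesssim \alpha^{-1/2}\left(\|D^2\ua\|_2^2+\alpha\|\nabla D^2\ua\|_2^2\right)$, so that for fixed $\alpha$ the $\alpha$-weighted norm converts the cubic term into a \emph{linear} Gr\"onwall inequality for $\|\Delta\ua\|_2^2+\alpha\|\nabla\Delta\ua\|_2^2$; the embedding $H^3(\T^2)\subset W^{1,\infty}(\T^2)$ then gives $\|\nabla\ua\|_\infty\leq C_\alpha$ and the $H^s$ estimate closes by plain Gr\"onwall. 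You instead invoke the conserved quantity $\|\oa\|_2^2+\alpha\|\nabla\oa\|_2^2$ from \eqref{eq:transpvortalpha} (which the paper only exploits later, in Proposition \ref{prop:unif}) to get a time-uniform $H^2$ bound on $\ua$ for fixed $\alpha$ (using $\|\nabla\ua\|_2=\|\oa\|_2$ and $\|D^2\ua\|_2=\|\nabla\oa\|_2$ for zero-average divergence-free fields), and then a Brezis--Gallouet inequality to control $\|\nabla\ua\|_\infty$ with only a logarithmic loss in $E_s$, closing via an Osgood-type differential inequality; your diagnosis that a naive Sobolev bound would give the non-closable $E_s^{3/2}$ inequality is accurate, so the logarithmic inequality is genuinely needed on your route. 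Both arguments are sound and 2D-specific; the paper's buys a cleaner, log-free bound on $\|\nabla\ua\|_\infty$ from elementary interpolation and the structure of the $\alpha$-weighted norms, while yours avoids the third-order energy estimate altogether, gives a slower (sub-double-exponential) growth of $E_s$, and is closer in spirit to the higher-order regularity arguments of \cite{LaT10}.
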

The proof is analogous to \cite[Theorem 7.2]{CLT06} and it is based on a standard continuity argument and higher-order energy estimates. Since in \cite{CLT06} only the three-dimensional case is proved we give a brief sketch. 
\begin{proof}
We only give the {\em a priori} estimates. The energy estimate is the following 
\begin{equation*}
\frac{d}{dt}\left(\|\ua(t)\|_2^2+\alpha\|\nabla\ua\|_2^2\right)=0. 
\end{equation*}
Moreover, for any $s>2$ by a classical $H^s$-estimate we have that 
\begin{equation*}
\frac{d}{dt}\left(\|u^{\alpha}(t)\|^2_{s,2}+\alpha\|u^{\alpha}(t)\|^2_{s+1,2}\right)\lesssim \|\nabla u^{\alpha}(t)\|_{\infty}\|u^{\alpha}(t)\|^2_{s,2}. 
\end{equation*}
Then, if we can produce an {\em a priori} bound on $\nabla u^{\alpha}$ in $L^{1}(0,T;L^{\infty}(\T^{2}))$ we can conclude by using Gr\"onwall lemma. To this end we note that 
\begin{equation}\label{eq:deltaua}
\begin{aligned}
&\partial_{t}\Delta\ua-\alpha \partial_t\Delta(\Delta\ua)+\nabla\Delta p^{\alpha}+\Delta\ua\cdot\nabla\ua+2\nabla\ua\cdot\nabla\nabla\ua+\ua\cdot\nabla\Delta\ua=0.
\end{aligned}
\end{equation}
Then, by multiplying \eqref{eq:deltaua} by $\Delta\ua$, integrating by parts, and using the divergence-free condition we get 
\begin{equation*}
\begin{aligned}
\frac{d}{dt}\left(\|\Delta\ua\|_{2}^2+\alpha\|\nabla\Delta\ua\|_{2}^2\right)=&-4\int\nabla\ua\cdot\nabla\nabla\ua\Delta\ua\,dx-2\int\Delta\ua\cdot\nabla\ua\Delta\ua\,dx\\
&\lesssim \int|D^{2}\ua|^2|\nabla\ua|\,dx.
\end{aligned}
\end{equation*}
Recalling that in two dimensions it holds that, for any $f\in H^{1}(\T^2)$ with zero average,
\begin{equation}\label{eq:lad}
\|f\|_{4}\leq C \|f\|_2^{\frac{1}{2}}\|\nabla\,f\|_{2}^{\frac{1}{2}},
\end{equation}
we have  
\begin{equation*}
\begin{aligned}
\frac{d}{dt}(\|\Delta\ua\|_{2}^2+\alpha\|\nabla\Delta\ua\|_{2}^{2})&\lesssim\|\nabla\ua\|_2\|D^{2}\ua\|_{4}^{2}\\
&\lesssim\frac{1}{\alpha}\|\nabla\ua\|_2(\|D^{2}\ua\|_{2}^{2}+\alpha\|\nabla\,D^{2}\ua\|_{2}^{2}). 
\end{aligned}
\end{equation*}
Thus, by some further elementary manipulations and by using Gr\"onwall lemma we have that for some $C_{\alpha}>0$ blowing-up as $\alpha\to 0$, the following estimate holds 
\begin{equation*}
\sup_{0 \leq t \leq T}\|\nabla\,D^{2}\ua(t)\|_{2}\leq C_{\alpha}. 
\end{equation*}
Finally, by using Sobolev embeddings we have that 
\begin{equation*}
\sup_{0 \leq t \leq T}\|\nabla\ua(t)\|_{\infty}\leq C_{\alpha},
\end{equation*}
and we can conclude. 
\end{proof}
In the following proposition we prove some estimates uniform in $\alpha \in (0, 1]$. 
\begin{prop}\label{prop:unif}
Let $\{\ua_{0}\}_{\alpha}$ be a zero-average, divergence-free initial datum satisfying \eqref{eq:aid1} and \eqref{eq:aid2} and let $T>0$ arbitrary and finite. Let $T>0$ arbitrary and finite, then, there exists a constant $C>0$ independent on $\alpha$ such that 
\begin{equation}\label{eq:unifa}
\begin{aligned}
&\sup_{0 \leq t \leq T}\|\ua\|_2\leq C,\qquad &\sup_{0 \leq t \leq T}\|\omega^{\alpha}\|_{2}\leq C,\qquad \sup_{0 \leq t \leq T}\sqrt{\alpha}\|\nabla\omega^{\alpha}\|_{2}\leq C. 
\end{aligned}
\end{equation}
\end{prop}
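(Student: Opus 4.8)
The plan is to obtain all three bounds in \eqref{eq:unifa} from two \emph{exact} conservation laws satisfied by the Euler--Voigt flow: the conservation of the modified energy $\|\ua\|_2^2+\alpha\|\nabla\ua\|_2^2$ and of the modified enstrophy $\|\oa\|_2^2+\alpha\|\nabla\oa\|_2^2$. For each fixed $\alpha>0$ the solution $\ua$ is smooth by Proposition~\ref{prop:regv}, so all the integrations by parts below are fully justified; the uniformity in $\alpha\in(0,1]$ will then be a direct consequence of the hypotheses \eqref{eq:aid1}--\eqref{eq:aid2} on the initial data.

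First I would recall the energy identity already used in the proof of Proposition~\ref{prop:regv}, obtained by testing \eqref{eq:ev} with $\ua$ and integrating by parts in the dispersive term, namely
\begin{equation*}
\|\ua(t)\|_2^2+\alpha\|\nabla\ua(t)\|_2^2=\|\ua_0\|_2^2+\alpha\|\nabla\ua_0\|_2^2\qquad\text{for all }t\in[0,T].
\end{equation*}
Since $\{\ua_0\}_\alpha$ is bounded in $H^1(\T^2)$ uniformly in $\alpha$ by \eqref{eq:aid2}, the right-hand side is bounded by $C+\alpha C\le C$ for $\alpha\le 1$; discarding the nonnegative term $\alpha\|\nabla\ua(t)\|_2^2$ yields the first bound in \eqref{eq:unifa}.

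Next I would test the vorticity equation \eqref{eq:transpvortalpha} with $\oa$. The term $\int(\ua\cdot\nabla\oa)\,\oa\,dx$ vanishes because $\ua$ is divergence-free, while integrating by parts in the dispersive term gives $-\alpha\int\Delta\partial_t\oa\,\oa\,dx=\frac{\alpha}{2}\frac{d}{dt}\|\nabla\oa\|_2^2$, producing the second conserved quantity
\begin{equation*}
\|\oa(t)\|_2^2+\alpha\|\nabla\oa(t)\|_2^2=\|\oa_0\|_2^2+\alpha\|\nabla\oa_0\|_2^2\qquad\text{for all }t\in[0,T].
\end{equation*}
Here $\|\oa_0\|_2$ is controlled by the uniform $H^1$ bound on $\{\ua_0\}_\alpha$ (as $\oa_0=\curl\ua_0$), and $\alpha\|\nabla\oa_0\|_2^2$ is controlled by the hypothesis $\{\sqrt{\alpha}\,\nabla\omega_0^\alpha\}_\alpha\subset L^2(\T^2)$ in \eqref{eq:aid2}, so the right-hand side is bounded uniformly in $\alpha$. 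Keeping the two nonnegative terms on the left separately then delivers at once the remaining bounds $\sup_t\|\oa\|_2\le C$ and $\sup_t\sqrt{\alpha}\|\nabla\oa\|_2\le C$.

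The argument is essentially computational and presents no serious obstacle: the only point requiring a little care is the legitimacy of the two energy balances, which is guaranteed for each fixed $\alpha$ by the smoothness from Proposition~\ref{prop:regv}. The crucial feature making the bounds \emph{uniform} is that these balances are exact conservation laws, so no growth in time is incurred and the constant $C$ depends only on the initial-data norms appearing in \eqref{eq:aid2}, and in particular not on $T$.
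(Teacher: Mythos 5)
Your proposal is correct and follows essentially the same route as the paper: testing the momentum equation with $\ua$ and the vorticity equation \eqref{eq:transpvortalpha} with $\oa$ to obtain the two exact conservation laws for the modified energy and modified enstrophy, then invoking \eqref{eq:aid2} (with $\|\oa_0\|_2\le\|\nabla\ua_0\|_2$ and $\alpha\le 1$) for uniformity in $\alpha$. Your added remarks on the justification of the integrations by parts via Proposition~\ref{prop:regv} and on the $T$-independence of the constant are accurate refinements of the paper's argument, not deviations from it.
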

\begin{proof}
Multiplying by $\ua$ the first equations of \eqref{eq:ev}, we have that 
\begin{equation*}
\|\ua(t)\|_{2}^{2}+\alpha\|\nabla\ua(t)\|_{2}^{2}=\|\ua_{0}\|_{2}^{2}+\alpha\|\nabla\ua_0\|_{2}^{2}. 
\end{equation*}
Thus, by using \eqref{eq:aid2} we have the first bound in \eqref{eq:unifa}. Concerning the last two bounds, by using the equation for the vorticity 
\begin{equation*}
\partial_{t}\omega^{\alpha}-\alpha\Delta\partial_t\omega^{\alpha}+\ua\cdot\nabla\omega^{\alpha}=0,
\end{equation*}
we obtain that 
\begin{equation*}
\|\omega^{\alpha}(t)\|_{2}^2+\alpha\|\nabla\omega^{\alpha}(t)\|_{2}^{2}=\|\omega^{\alpha}_0\|_{2}^{2}+\alpha\|\nabla\omega^{\alpha}_0\|_{2}^{2}, 
\end{equation*}
and thus by using \eqref{eq:aid2} we conclude. 
\end{proof}
\section{Proof of Theorem \ref{teo:weak}}\label{sec:3}
The proof is based on a compactness argument and some tools from DiPerna-Lions theory \cite{DPL89}. 
\begin{proof}[Proof of Theorem \ref{teo:weak}]
Let $\ua$ be the solution of \eqref{eq:ev} obtained in Proposition \ref{prop:regv}. By Proposition \ref{prop:unif} we have that 
\begin{equation*}
\{\ua\}_{\alpha}\subset L^{\infty}(0,T;H^{1}(\T^{2})). 
\end{equation*}
We can also easily deduce that for any $s>2$
\begin{equation*}
\{\partial_{t}\ua\}_{\alpha}\subset L^{\infty}(0,T;H^{-s}(\T^{2})),
\end{equation*}
and thus by Aubin-Lions Lemma we get that there exists 
\begin{equation*}
u\in C([0,T);L^{2}(\T^{2}))\cap L^{\infty}(0,T;H^{1}(\T^{2})),
\end{equation*}
such that 
\begin{equation}\label{eq:convebasic}
\begin{aligned}
&\ua\rightarrow u\mbox{ in }C([0,T];L^{2}(\T^{2})),\\
&\oa	\weaktos \omega\mbox{ in }L^{\infty}(0,T;L^{2}(\T^2)). 
\end{aligned}
\end{equation}
Moreover, for any $\phi\in H^{s}(\T^{2})$, with $s>2$, it holds that 
\begin{equation*}
\begin{aligned}
\sup_{0\leq t\leq T}&\alpha|(\nabla\ua(t),\nabla\phi)|\rightarrow 0,\mbox{ as }\alpha\to 0,\\
&\alpha(\nabla\ua_0,\nabla\phi)\rightarrow 0,\mbox{ as }\alpha\to 0.
\end{aligned}
\end{equation*}
Then, we can conclude that $u$ is a weak solution of \eqref{eq:e} and in addition 
\begin{equation}\label{eq:ens}
\omega\in L^{\infty}(0,T;L^{2}(\T^{2})). 
\end{equation}
To prove \eqref{eq;energyth} it is enough to recall that by \cite[Theorem 1]{CLLS16} every weak solution of \eqref{eq:e} such that $\omega\in L^{\infty}(0,T;L^{p}(\T^{2}))$ with $p>3/2$ conserves the energy. Thus, by \eqref{eq:ens} we obtain \eqref{eq;energyth}. 
Next, we prove the conservation of the enstrophy. First, given $\phi\in H^{s}(\T^2)$ with $s>2$, by Proposition \ref{prop:unif} and \eqref{eq:convebasic} we have that 
\begin{equation}\label{eq:weakvora}
(\omega^{\alpha}(t),\phi)+\alpha(\nabla\oa(t),\nabla\phi)-(\oa_0,\phi)
-\alpha(\nabla\oa_0,\nabla\phi)+\int_0^t(\oa(\tau)\ua(\tau),\nabla\phi)\,d\tau=0,
\end{equation}
converges, as $\alpha\to 0$, to 
\begin{equation}\label{eq:weakvor}
(\omega(t),\phi)-(\omega_0,\phi)+\int_0^t(\omega(\tau)u(\tau),\nabla\phi)\,d\tau=0.
\end{equation}
By \cite[Proposition 1]{LML06} it follows that $\omega\in C([0,T);L^{2}(\T^2))$ and it is a renormalized solution of the vorticity equation in the sense of DiPerna-Lions \cite{DPL89}, and thus \eqref{eq;enstrophyth} holds. Finally, we prove \eqref{eq:convoth}. Let $t\in[0,T]$, by \eqref{eq:weakvora} and \eqref{eq:weakvor} we have 
\begin{equation*}
\begin{aligned}
(\omega^{\alpha}(t)-\omega(t),\phi)&=(\oa_0-\omega_0,\phi)+\alpha(\nabla\oa_0,\nabla\phi)-\alpha(\nabla\oa(t),\nabla\phi)\\
&+\int_0^t(\omega(\tau)u(\tau)-\oa(\tau)\ua(\tau),\nabla\phi)\,d\tau.
\end{aligned}
\end{equation*}
Thus by using \eqref{eq:convebasic} and Proposition \ref{prop:unif} we get 
\begin{equation}\label{eq:pointwise}
\oa(t)\weakto\omega(t)\mbox{ in }L^{2}(\T^{2}). 
\end{equation}
In addition, by \eqref{eq;enstrophyth} and \eqref{eq:pointwise} we have that for any $t\in[0,T]$
\begin{equation*}
\begin{aligned}
\|\omega(t)\|_2&\leq \liminf_{\alpha\to 0}\|\oa(t)\|_2\\
&\leq \lim_{\alpha\to 0}\left(\|\oa_0\|_2^2+\alpha\|\nabla\oa_0\|_2^2\right)^{\frac{1}{2}}\\
&=\|\omega_0\|_2=\|\omega(t)\|_2,
\end{aligned}
\end{equation*}
thus, having weak convergence and convergence on the norms, we deduce,
\begin{equation}\label{eq:stronpoint}
\oa(t)\to\omega(t)\mbox{ in }L^{2}(\T^{2}), \enskip \forall\,t \in [0,T]. 
\end{equation} 
To upgrade the convergence \eqref{eq:stronpoint} from pointwise convergence to uniform convergence in time it is enough to prove that for any $t\in[0,T]$ and $\{t_{\alpha}\}_{\alpha}\subset[0,T]$ such that $t_{\alpha}\to t$ it holds that 
\begin{equation}\label{eq:mainta}
\oa(t_{\alpha})\to\omega(t)\mbox{ in }L^{2}(\T^{2}),\mbox{ as }\alpha\to 0. 
\end{equation} 
To prove \eqref{eq:mainta} we first note that for any $\phi\in H^{s}(\T^{2})$ with $s>2$ it holds that
\begin{equation*}
\begin{aligned}
(\omega^{\alpha}(t_{\alpha})-\omega(t),\phi)&=(\oa_0-\omega_0,\phi)+\alpha(\nabla\oa_0,\nabla\phi)-\alpha(\nabla\oa(t_{\alpha}),\nabla\phi)\\
&+\int_t^{t_{\alpha}}(\omega(\tau)u(\tau)-\oa(\tau)\ua(\tau),\nabla\phi)\,d\tau.
\end{aligned}
\end{equation*}
Then
\begin{equation*}
\begin{aligned}
|(\oa(t_{\alpha})-\omega(t),\phi)|&\lesssim \|\oa_0-\omega_0\|_2\|\phi\|_2+\alpha\|\nabla\oa_0\|_2\|\nabla\phi\|_2+\alpha\|\nabla\oa(t_\alpha)\|_2\|\nabla\phi\|_2\\
&+\int_{t}^{t_{\alpha}}\left(\|\omega(\tau)\|_2\|u(\tau)\|_2+\|\oa(\tau)\|_2\|\ua(\tau)\|_2\right)\|\nabla\phi\|_{\infty}\,d\tau.
\end{aligned}
\end{equation*}
Then, by using  Proposition \ref{prop:unif} we get 
\begin{equation*}
|(\oa(t_{\alpha})-\omega(t),\phi)\lesssim \|\phi\|_{s,2}(\|\oa_0-\omega_0\|_2+\sqrt{\alpha}+|t_{\alpha}-t|). 
\end{equation*}
Letting $\alpha\to 0$ we deduce that 
\begin{equation}\label{eq:maintaweak}
\oa(t_{\alpha})\weakto\omega(t)\mbox{ in }L^{2}(\T^{2}). 
\end{equation}
Then \eqref{eq:mainta} follows from \eqref{eq;enstrophyth} and \eqref{eq:maintaweak} by using the very same argument used to prove \eqref{eq:stronpoint}. Indeed, 
\begin{equation*}
\begin{aligned}
\|\omega(t)\|_2&\leq \liminf_{\alpha\to 0}\|\oa(t_{\alpha})\|_2\\
&\leq \lim_{\alpha\to 0}\left(\|\oa_0\|_2^2+\alpha\|\nabla\oa_0\|_2^2\right)^{\frac{1}{2}}\\
&=\|\omega_0\|_2=\|\omega(t)\|_2.
\end{aligned}
\end{equation*}
\end{proof}
\section{Proof of Theorem \ref{teo:yudovich}}\label{sec:4}
In this section we prove Theorem \ref{teo:yudovich}. We start by recalling the two main tools we use in the proof. We first recall the following classical refinement of Gr\"onwall lemma. 
\begin{lemma}[Osgood Lemma]\label{lemma:osgood}
	Let $\rho$ be a positive Borel function, $\gamma$ a locally integrable positive function, $\mu$ a continuous positive increasing function and $\mathcal{M}(x) := \int_x^1\frac{dr}{\mu(r)}$. Let us assume that, for a strictly positive number $\eta$, the function $\rho$ satisfies
	\begin{equation*}
		\rho(t) \leq \eta+\int_{t_0}^t \gamma(\tau)\mu(\rho(\tau))\,d\tau.
	\end{equation*}
	Then, we have
	\begin{equation*}
	\mathcal{M}(\eta)-\mathcal{M}(\rho(t))\leq\int_{t_0}^t\gamma(\tau)\,d\tau.
\end{equation*}
\end{lemma}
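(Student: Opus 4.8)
The plan is to reduce the integral inequality to a differential inequality for the natural majorant and then to integrate the separated form $dr/\mu(r)$, which is exactly what the definition of $\mathcal{M}$ encodes. First I would introduce the right-hand side as a new function,
\begin{equation*}
R(t):=\eta+\int_{t_0}^t\gamma(\tau)\mu(\rho(\tau))\,d\tau,
\end{equation*}
so that by hypothesis $\rho(t)\le R(t)$ for all $t$, while $R(t_0)=\eta>0$. Since the hypothesis presupposes that $\gamma\,\mu(\rho)$ is locally integrable (otherwise the right-hand side is not finite), $R$ is well defined and absolutely continuous, with $R'(t)=\gamma(t)\mu(\rho(t))$ for a.e.\ $t$. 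Note also that $R$ is non-decreasing, being the integral of a non-negative integrand.

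Next I would use the monotonicity of $\mu$. Because $\mu$ is increasing and $\rho(t)\le R(t)$, we have $\mu(\rho(t))\le\mu(R(t))$, and therefore the a.e.\ differential inequality
\begin{equation*}
R'(t)\le\gamma(t)\,\mu(R(t))
\end{equation*}
holds. Dividing by the strictly positive quantity $\mu(R(t))$ and recalling that $\mathcal{M}'(x)=-1/\mu(x)$, I would rewrite this as $-\frac{d}{dt}\mathcal{M}(R(t))\le\gamma(t)$ for a.e.\ $t$. The chain rule here is legitimate because $\mathcal{M}$ is $C^1$ (as $1/\mu$ is continuous) and $R$ is absolutely continuous with range contained in a compact interval on which $\mu$ is bounded away from zero, so $\mathcal{M}\circ R$ is absolutely continuous and its derivative is computed by the usual rule.

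Then I would integrate from $t_0$ to $t$, using $R(t_0)=\eta$, to obtain
\begin{equation*}
\mathcal{M}(\eta)-\mathcal{M}(R(t))\le\int_{t_0}^t\gamma(\tau)\,d\tau.
\end{equation*}
Finally, since $\mu>0$ forces $\mathcal{M}$ to be strictly decreasing, and since $\rho(t)\le R(t)$, we get $\mathcal{M}(\rho(t))\ge\mathcal{M}(R(t))$, whence $\mathcal{M}(\eta)-\mathcal{M}(\rho(t))\le\mathcal{M}(\eta)-\mathcal{M}(R(t))$, which yields the claimed bound.

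The main obstacle I expect is not the algebra but the regularity bookkeeping: ensuring that $R$ is genuinely absolutely continuous (so that the fundamental theorem of calculus applies) and that the chain rule $\frac{d}{dt}\mathcal{M}(R(t))=\mathcal{M}'(R(t))R'(t)$ is valid a.e.\ despite $\rho$ being only Borel. The hypothesis $\eta>0$ is essential at this step, since it keeps $R$ bounded below by a positive constant and avoids the singular behaviour of $\mathcal{M}$ near $0$ (which is precisely where the Osgood condition would otherwise enter to force uniqueness); I would flag this role of $\eta>0$ explicitly.
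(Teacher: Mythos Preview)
Your argument is correct and is precisely the standard proof of the Osgood lemma: introduce the majorant $R$, derive the differential inequality $R'\le\gamma\,\mu(R)$ from monotonicity of $\mu$, separate variables via $\mathcal{M}$, and conclude by the monotonicity of $\mathcal{M}$. The paper does not give its own proof of this lemma; it simply cites \cite{C95} (Chemin's monograph), where exactly this argument appears, so there is nothing to compare beyond noting that your write-up matches the classical reference.
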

\noindent For a proof of the lemma we refer to \cite{C95}. The following Gagliardo-Nirenberg inequality will be employed: there exists a constant $C>0$ such that, for any $f\in H^{1}(\T^2)$ with zero average and any $p>2$, it holds that 
\begin{equation}\label{eq:gs}
\|f\|_{\frac{2p}{p-1}}\leq C\|f\|_{2}^{1-\frac{1}{p}}\|\nabla f\|_{2}^{\frac{1}{p}}, 
\end{equation}
we refer to \cite{KW08} for the proof. Note that the constant $C$ in \eqref{eq:gs} is independent on $p$. 
\begin{proof}[Proof of Theorem \ref{teo:yudovich}]
Taking the difference of \eqref{eq:ev} and \eqref{eq:e} we get that 
\begin{equation*}
\partial_t(\ua-u)-\alpha\Delta\partial_t\ua+\ua\cdot\nabla(\ua-u)+(\ua-u)\cdot\nabla u=0.
\end{equation*}
Taking the $L^2$ scalar product with $\ua-u$, we have 
\begin{equation}\label{eq:scalarl2}
\frac{d}{dt}\|\ua-u\|_2^2+2\alpha(\nabla\partial_t\ua,\nabla(\ua-u))=-2\int(\ua-u)\cdot \nabla u(\ua-u)\,dx. 
\end{equation}
Note that \eqref{eq:scalarl2} is rigorously justified in the regularity class of Yudovich weak solutions. 
We need to manipulate the second term from the left-hand side. By using \eqref{eq:enstrophy} we have that 
\begin{equation*}
\begin{aligned}
2\alpha(\nabla\partial_t\ua,\nabla(\ua-u))&=2\alpha(\partial_t\nabla(\ua-u),\nabla(\ua-u))+2\alpha(\partial_t\nabla u,\nabla\ua)\\
&=\alpha\frac{d}{dt}\|\nabla\ua-\nabla u\|_{2}^2-2\alpha(\partial_tu,\Delta\ua), 
\end{aligned}
\end{equation*}
and then by using \eqref{eq:e} and the divergence-free condition we obtain 
\begin{equation*}
\frac{d}{dt}\left(\|\ua-u\|_2^2+\alpha\|\nabla\ua-\nabla u\|_2^2\right)=-2\int(\ua-u)\nabla u(\ua-u)\,dx-2\alpha\int\,u\cdot\nabla\,u\Delta\ua\,dx.
\end{equation*}
By integrating in time and ignoring the positive term $\alpha\|\nabla\ua(t)-\nabla u(t)\|_2^2$, we infer 
\begin{equation}\label{eq:mainestimate}
\begin{aligned}
&\|\ua(t)-u(t)\|_2^2\lesssim \|\ua_0-u_0\|_2^2+\alpha\|\nabla\ua_0-\nabla u_0\|_2^2\\
&+\int_0^t\int|\ua-u||\nabla u||\ua-u|\,dxd\tau+2\alpha\int_0^t\int|u||\nabla\,u||\Delta\ua|\,dxd\tau. 
\end{aligned}
\end{equation}
Next, we estimate the two integrals on the right-hand side of \eqref{eq:mainestimate}. For the first one we have that 
\begin{equation*}
\begin{aligned}
\int_0^t\int|\ua-u||\nabla u||\ua-u|\,dxd\tau&\lesssim \int_0^t\|\ua-u\|_{\frac{2p}{p-1}}\|\nabla u\|_{2p}\|\ua-u\|_2\,d\tau\\
&\lesssim \int_0^t\|\ua-u\|_2^{1-\frac{1}{p}}\|\nabla\ua-\nabla u\|_{2}^{\frac{1}{p}}\|\nabla\,u\|_{2p}\|\ua-u\|_2\,d\tau\\
&\lesssim 2p\int_0^t\|\ua-u\|_2^{2-\frac{1}{p}}\,d\tau,
\end{aligned}
\end{equation*}
where we have used H\"older inequality, the Gagliardo-Nirenberg inequality \eqref{eq:gs}, and \eqref{eq:cz}. Concerning the second term, by using H\"older inequality, Sobolev embedding and \eqref{eq:cz}, and Proposition \ref{prop:unif} we have that 
\begin{equation*}
\begin{aligned}
2\alpha\int_0^t\int|u||\nabla\,u||\Delta\ua|\,dxd\tau
&\lesssim T\,\sqrt{\alpha}\sup_{0 \leq t \leq T}\left(\|u\|_{\infty}\|\nabla\,u\|_2\sqrt{\alpha}\|\Delta\ua\|_2\right)\\
&\lesssim T\,\sqrt{\alpha}\sup_{0 \leq t \leq T}\left(\|\omega\|_{\infty}\|\nabla\,u\|_2\sqrt{\alpha}\|\Delta\ua\|_2\right)\\
&\lesssim \sqrt{\alpha}. 
\end{aligned}
\end{equation*}
Finally, by using \eqref{eq:aid2} and taking $\alpha<1$ small enough, we obtain that 
\begin{equation*}
\|\ua(t)-u\|_2^2\lesssim 2p\int_0^t\|\ua(\tau)-u(\tau)\|_2^{2\left(1-\frac{1}{2p}\right)}\,d\tau+\sqrt{\alpha}+\|\ua_0-u_0\|_2^2. 
\end{equation*}
Defining 
\begin{equation*}
y_{\alpha}(t):=\|\ua(t)-u(t)\|_2^2\,\qquad \delta_{\alpha}:=\sqrt{\alpha}+\|\ua_0-u_0\|_2^2,
\end{equation*}
we have that,
\begin{equation}\label{eq:osgoodalpha}
y_{\alpha}(t)\lesssim 2p\int_0^t\,[y_{\alpha}(s)]^{\left(1-\frac{1}{2p}\right)}\,ds+\delta_{\alpha},
\end{equation}
and we can conclude as in \cite[Theorem 1.4]{C96}: Assuming that 
$y_{\alpha}(t)<1$, we choose $2p(\tau)=2-\ln(y_{\alpha}(\tau))$, and then from \eqref{eq:osgoodalpha} we get that 
\begin{align*}
  y_{\alpha}(t)&\leq C\delta_{\alpha}+\displaystyle\int_0^t
          C\big(2-\ln(y_{\alpha}(\tau)\big)\,y_{\alpha}(\tau)^{1-\frac{1}{2-\ln(y_{\alpha}(\tau))}}d
          \tau
  \\
        &\leq C\delta_{\alpha}+C\displaystyle\int_0^t(2-\ln(y_{\alpha}(\tau)))\,y_{\alpha}(\tau)\,d \tau,
\end{align*}
with $C=C(T,\|\omega_0\|_{\infty}, \sup_{\alpha}(\sqrt{\alpha}\|\nabla\oa_0\|_2))>0$. Then, by using Lemma~\ref{lemma:osgood} with
$$
\begin{aligned}
  \rho(t)&:=y_{\alpha}(t),\hspace{0.5cm}\alpha:=C\delta_{\alpha},\hspace{0.5cm}\gamma(t):=C,
\\
\mu(x)&:=x(2-\ln x),\hspace{0.5cm}\mathcal{M}(x):=\ln(2-\ln x)-\ln 2,
\end{aligned}
$$
we obtain that
\begin{equation*}
-\ln(2-\ln y_{\alpha}(t))+\ln(2-\ln\delta_{\alpha})\leq C\,t ,
\end{equation*}
which implies that
\begin{equation*}\label{eq:rate_y_nu}
y_{\alpha}(t)\leq \exp\left(2-2\mathrm{e}^{-c\,t}\right)\left(\delta_{\alpha}\right)^{\mathrm{e}^{-C\,t}}\leq C_1\left(\delta_{\alpha}\right)^{\mathrm{e}^{-C_2 T}}.
\end{equation*}
Thus, for some constants $C_1,C_2>0$ depending only on $T$, $\|\omega_0\|_{\infty}$, and $\sup_{\alpha}(\sqrt{\alpha}\|\nabla\oa_0\|_2)$ we have that 
\begin{equation*}
\sup_{0 \leq t \leq T}\|\ua(t)-u(t)\|_{2}^{2}\lesssim (\sqrt{\alpha}+\|\ua_0-u_0\|_{2}^2)^{\mathrm{e}^{-C_1\,T}},
\end{equation*}
provided $\sqrt{\alpha}+\|\ua_0-u_0\|_{2}^2\leq C_2$ and we conclude. 
\end{proof}
\section{Proof of Theorem \ref{teo:strong}}\label{sec:5}
In this section we prove Theorem \ref{teo:strong}. 
We define  
\begin{equation}\label{eq:defcs}
C_{u_0}^{s,T}:=C\|u_{0}\|_{s,2}\exp\left((1+2\log^{+}\|u_0\|_{s,2})\mathrm{e}^{CT\|u_{0}\|_{s,2}}-1\right).
\end{equation}
By Sobolev embedding and Theorem \ref{teo:grhse} we have that 
\begin{equation}\label{eq:ghs}
\sup_{0 \leq t \leq T}\|u(t)\|_{s,2}\leq \Cs. 
\end{equation}
\begin{proof}[Proof of Theorem \ref{teo:strong}]
We divide the proof in two steps.\\

\noindent{\em Step 1: Convergence of the velocity.}\\

Taking the difference between \eqref{eq:ev} and \eqref{eq:e}, arguing as in the proof of Theorem \ref{teo:yudovich}, we obtain that 
\begin{equation*}
\frac{d}{dt}\left(\|\ua-u\|_2^2+\alpha\|\nabla\ua-\nabla u\|_2^2\right)=-2\int(\ua-u)\nabla u(\ua-u)\,dx-2\alpha\int\,u\cdot\nabla\,u\Delta\ua\,dx.
\end{equation*}
By using Sobolev embedding and \eqref{eq:ghs} we have 
\begin{equation*}
\int|\ua-u|^2|\nabla u|\,dx\lesssim \|u(t)\|_{s,2}\|\ua(t)-u(t)\|_2^2\leq \Cs\|\ua(t)-u(t)\|_2^2.
\end{equation*}
Concerning the second term, by using that $(u\cdot\nabla u,\Delta u)=0$, we first note that
\begin{equation*}
2\alpha\int u\cdot\nabla u\Delta\ua\,dx=2\alpha\int u\cdot\nabla u\,\Delta(\ua-u)\,dx,
\end{equation*}
and, integrating by parts using Proposition \ref{prop:unif} and \eqref{eq:ghs}, we have 
\begin{equation*}
\begin{aligned}
2\alpha\left|\int u\cdot\nabla u\Delta(\ua-u)\,dx\right|&\lesssim \alpha\int|\nabla u|^2|\nabla\ua-\nabla u|\,dx\\
&+\alpha\int|u||D^2u||\nabla\ua-\nabla u|\,dx\\
&\lesssim T(\Cs)^2\,\alpha\lesssim\alpha.
\end{aligned}
\end{equation*}
Thus, 
\begin{equation*}
\frac{d}{dt}\|\ua-u\|_{2}^{2}\lesssim \|\ua-u\|_{2}^{2}+\alpha
\end{equation*}
and by Gr\"onwall lemma we have 
\begin{equation*}
\sup_{0 \leq t \leq T}\|\ua-u\|_2\lesssim [\alpha+\|\ua_0-u_0\|^2_2]^{\frac{1}{2}}.
\end{equation*}
\noindent{\em Step 2: Convergence of the vorticity.}\\

We first consider the case $u_0\in H^{s}(\T^2)$, with $s\geq 3$. By taking the difference between the vorticity equation of \eqref{eq:ev} and the one of \eqref{eq:e} we obtain 
\begin{equation*}
\partial_t(\oa-\omega)-\alpha\Delta\partial_t\oa+\ua\cdot\nabla\oa-u\cdot\nabla\omega=0. 
\end{equation*}
Then, by suitable manipulations, we have 
\begin{equation}\label{eq:difom}
\partial_t(\oa-\omega)-\alpha\Delta\partial_t(\oa-\omega)+
\ua\cdot\nabla(\oa-\omega)=-(\ua-u)\cdot\nabla\omega
+\alpha\Delta(u\cdot\nabla \omega).
\end{equation}
Taking the $L^2$-scalar product of \eqref{eq:difom} with $\oa-\omega$ we have 
\begin{equation*}
\begin{aligned}
\frac{d}{dt}\left(\|\oa-\omega\|_2^2+\alpha\|\nabla\oa-\nabla\omega\|_2^2\right)&=-2\int(\ua-u)\cdot\nabla\omega\,(\oa-\omega)\,dx\\
&-2\alpha\int\nabla(u\cdot\nabla \omega)(\nabla\oa-\nabla\omega)\,dx.
\end{aligned}
\end{equation*}
Thus, by integrating in time and using \eqref{eq:aid2} we obtain 
\begin{equation}\label{eq:mainvorticity}
\begin{aligned}
\|\oa-\omega\|_2^2+\alpha\|\nabla\oa-\nabla\omega\|_2^2&\lesssim \|\oa_0-\omega_0\|_2^2+\alpha\|\nabla\oa_0-\nabla\omega_0\|_2^2\\
&+\int_0^t\int|\ua-u||\nabla\omega||\oa-\omega|\,dxd\tau\\
&+\alpha\int_0^t\int|\nabla u||\nabla\omega||\nabla\oa-\nabla\omega|\,dxd\tau\\
&+\alpha\int_0^t\int|u||D^2\omega||\nabla\oa-\nabla\omega|\,dxd\tau. 
\end{aligned}
\end{equation}
By using H\"older inequality and \eqref{eq:lad} we have that 
\begin{equation*}
\begin{aligned}
&\int_0^t\int|\ua-u||\nabla\omega||\oa-\omega|\,dxds
\lesssim\int_0^t\|\ua-u\|_4\|\nabla\omega\|_4\|\oa-\omega\|_2\,d\tau\\
&\lesssim \int_0^t\|\ua-u\|_2^{\frac{1}{2}}\|\ua-u\|_{1,2}^{\frac{1}{2}}
\|u\|_{2,2}^{\frac{1}{2}}\|u\|_{3,2}^{\frac{1}{2}}\|\oa-\omega\|_{2}\,d\tau\\
&\lesssim\int_0^t\|u\|_{s,2}\|\oa-\omega\|_{2}^{2}\,ds\lesssim\Cs\int_0^t\|\oa-\omega\|_{2}^{2}\,d\tau\lesssim \int_0^t\|\oa-\omega\|_{2}^{2}\,d\tau.
\end{aligned}
\end{equation*}
Moreover, by Proposition \ref{prop:unif}, \eqref{eq:aid2}, Sobolev embedding, and Young's inequality we have that 
\begin{equation*}
\begin{aligned}
\alpha\int_0^t\int|\nabla
u||\nabla\omega||\nabla\oa-\nabla\omega|\,dxd\tau
\lesssim\,&\alpha\int_0^t\|\nabla
u\|_{\infty}\|\nabla\omega\|_{2}\|\nabla\oa-\nabla\omega\|_2\,d\tau
\\
\lesssim\,&
T(\Cs)^4\alpha+\alpha\int_0^t\|\nabla\oa-\nabla\omega\|_2^2\,d\tau
\\
\lesssim\,& \alpha+\alpha\int_0^t\|\nabla\oa-\nabla\omega\|_2^2\,d\tau,
\end{aligned}
\end{equation*}
and 
\begin{equation*}
\begin{aligned}
\alpha\int_0^t\int|u||D^2\omega||\nabla\oa-\nabla\omega|\,dxd\tau&\lesssim \alpha\int_0^t\|u\|_{\infty}\|u\|_{s,2}\|\nabla\oa-\nabla\omega\|_2\,d\tau\\
&\lesssim T(\Cs)^4\alpha+\alpha\int_0^t\|\nabla\oa-\nabla\omega\|_2^2\,d\tau\\
&\lesssim \alpha+\alpha\int_0^t\|\nabla\oa-\nabla\omega\|_2^2\,d\tau.
\end{aligned}
\end{equation*}
Collecting the previous estimate we obtain 
\begin{equation*}
\begin{aligned}
\|\oa-\omega\|_2^2+\alpha\|\nabla\oa-\nabla\omega\|_2^2&\lesssim \int_0^t\left(\|\oa-\omega\|_2^2+\alpha\|\nabla\oa-\nabla\omega\|_2^2\right)\,d\tau\\
&+\alpha+\|\oa_0-\omega_0\|_2^2+\alpha\|\nabla\oa_0-\nabla\omega_0\|_2^2,
\end{aligned}
\end{equation*}
and by Gr\"onwall lemma 
\begin{equation*}
\|\oa-\omega\|_2^2\lesssim (\alpha+\|\oa_0-\omega_0\|_2^2+\alpha\|\nabla\oa_0-\nabla\omega_0\|_2^2)^{\frac{1}{2}}.
\end{equation*}
Next, we consider the case $2<s<3$. The argument is slightly more involved. Given $u_0\in H^{s}(\T^2)$ with $2<s<3$, we define $\un_0$ as 
\begin{equation*}
\un_0(x)=\sum_{0<|k|\leq N}\widehat{u_0}_{k}\,\mathrm{e}^{
    ik\cdot
    x},
\end{equation*}
where $\widehat{u_0}_{k}$ is the Fourier coefficients of $u_0$
associated with the wave-number $k$. Given $s\in(2,3)$ it holds that
\begin{equation}\label{eq:nest}
\begin{aligned}
&\|\un_0\|_{s,2}\leq \|u_0\|_{s,2},\\
&\|\un_0\|_{s',2}\leq N^{s'-s}\|u_0\|_{s,2},\mbox{ $s'>s$},\\
&\|\un_0-u_0\|_{\bar{s},2}\leq \frac{\|u_0\|_{s,2}}{N^{s-\bar{s}}},\mbox{ $0\leq\bar{s}<s$}.
\end{aligned}
\end{equation}
Let $\un$ be the unique classical solution of \eqref{eq:e} with initial datum $\un_0$ given by Theorem \ref{teo:grhse}, namely 
\begin{equation*}
\begin{cases}
\partial_t\un+\un\cdot\nabla\un+\nabla p^N=0\\
\diver\un=0\\
\un|_{t=0}=\un_0.
\end{cases}
\end{equation*}
We first note that by using \eqref{eq:dexp}, \eqref{eq:nest}, and
\eqref{eq:defcs} we have that the unique solution $u^N$ satisfies 
\begin{equation}\label{eq:sn}
\sup_{0 \leq t \leq T}\|u^N(t)\|_{s,2}\lesssim \Csn\lesssim \Cs.
\end{equation}
Moreover, a standard higher-order energy estimate gives
\begin{equation}\label{eq:3n}
\partial_t\|\un(t)\|_{3,2}\lesssim \|\nabla\un(t)\|_{\infty}\|\un(t)\|_{3,2}.
\end{equation}
By using Sobolev embedding, \eqref{eq:sn}, Gr\"onwall lemma, and \eqref{eq:nest} we have 
\begin{equation*}
\begin{aligned}
\sup_{0 \leq t \leq T}\|\un(t)\|_{3,2}&\lesssim \mathrm{e}^{T\Cs}\|\un_0\|_{3,2}\lesssim \mathrm{e}^{T\Cs}N^{3-s}\|u_0\|_{s,2}. 
\end{aligned}
\end{equation*}
Thus, 
\begin{equation*}\label{eq:main3n}
\sup_{0 \leq t \leq T}\|\un(t)\|_{3,2}\lesssim N^{3-s}.
\end{equation*}
Next, the equation for $\omn-\omega$ is 
\begin{equation}\label{eq:diffomn}
\partial_t(\omn-\omega)+\un\cdot\nabla(\omn-\omega)=-(\un-u)\cdot\nabla\omega.
\end{equation}
Taking the $L^2$-scalar product of \eqref{eq:diffomn} with $\omn-\omega$ and integrating in time we obtain 
\begin{equation*}
\|\omn-\omega\|_2^2\lesssim \int_0^t\int|\un-u||\nabla\omega||\omn-\omega|\,dxd\tau+\|\omn_0-\omega_0\|_2^2. 
\end{equation*}
By using H\"older inequality, \eqref{eq:gs} and Sobolev embedding we have 
\begin{equation}\label{eq:mainn}
\begin{aligned}
&\int_0^t\int|\un-u||\nabla\omega||\omn-\omega|\,dxd\tau\\
\lesssim&\,\int_0^t\|\un-u\|_{\frac{2}{s-2}}\|\nabla\omega\|_{\frac{2}{3-s}}\|\omn-\omega\|_2\,d\tau\\
\lesssim&\, \int_0^t\|\un-u\|_2^{s-2}\|\omn-\omega\|_2^{3-s}\|u\|_{s,2}\|\omn-\omega\|_2\,d\tau\\
\lesssim&\, \int_0^t\|u\|_{s,2}\|\omn-\omega\|_2^2\,ds\lesssim\Cs\int_0^t\|\omn-\omega\|_2^2\,d\tau\\
\lesssim&\,\int_0^t\|\omn-\omega\|_2^2\,d\tau.
\end{aligned}
\end{equation}
Therefore 
\begin{equation*}
\|\omn-\omega\|_2^2\lesssim \int_0^t\|\omn-\omega\|_2^2\,d\tau+\|\omn_0-\omega_0\|_2^2,
\end{equation*}
and by using Gr\"onwall lemma and \eqref{eq:nest} we get 
\begin{equation}\label{eq:fin}
\sup_{0 \leq t \leq T}\|\omn-\omega\|_2^2\lesssim \|\omn_0-\omega_0\|_2^2\lesssim\frac{1}{N^{2(s-1)}}. 
\end{equation}
Next, we estimate the $L^{2}$-norm of $\omn-\oa$. Obviously the inequality \eqref{eq:mainvorticity} holds with $\omn$ replacing $\omega$, namely 
\begin{equation*}\label{eq:mainvorticityn}
\begin{aligned}
\|\oa-\omn\|_2^2+\alpha\|\nabla\oa-\nabla\omn\|_2^2&\lesssim \|\oa_0-\omega^N_0\|_2^2+\alpha\|\nabla\oa_0-\nabla\omega^N_0\|_2^2\\
&+\int_0^t\int|\ua-\un||\nabla\omn||\oa-\omn|\,dxd\tau\\
&+\alpha\int_0^t\int|\un||D^2\omn||\nabla\oa-\nabla\omn|\,dxd\tau\\
&+\alpha\int_0^t\int|\nabla\un||\nabla\omn||\nabla\oa-\nabla\omn|\,dxd\tau\\
&=I_1+I_2+I_3+I_4+I_5. 
\end{aligned}
\end{equation*}
 We estimate the terms $I_i$'s separately. Concerning $I_1$, by triangular inequality and \eqref{eq:nest} we have 
 \begin{equation*}
 \begin{aligned}
 I_1&\lesssim \|\omn_0-\omega_0\|_2^2+\|\oa_0-\omega_0\|_2^2\\
 &\lesssim \frac{1}{N^{2(s-1)}}+\|\oa_0-\omega_0\|_2^2.
 \end{aligned}
 \end{equation*}
 Similarly, for $I_2$ we have 
\begin{equation*}
 \begin{aligned}
 I_2&\lesssim \alpha\|\nabla\oa_0-\nabla\omega_0\|_2^2+\alpha\|\nabla\omega_0-\nabla\oa_0\|_2^2\\
&\lesssim \frac{\alpha}{N^{2(s-2)}}+\alpha\|\nabla\omega_0-\nabla\oa_0\|_2^2.
\end{aligned}
\end{equation*}
Regarding $I_3$, arguing exactly as in \eqref{eq:mainn} we obtain that 
\begin{equation*}
\begin{aligned}
I_3&\lesssim \int_0^t\|\un\|_{s,2}\|\omn-\oa\|_2^{2}\,d\tau.
\end{aligned}
\end{equation*}
Then, by using \eqref{eq:sn} we have  that 
\begin{equation*}
I_3\lesssim \int_0^t\|\omn-\oa\|_2^{2}\,ds.
\end{equation*}
Next, we consider $I_4$. By using H\"older inequality, Sobolev embeddings and Young's inequality we have 
\begin{equation*}
\begin{aligned}
I_4&\lesssim \alpha\int_0^t\|\un\|_{\infty}\|D^2\omn\|_2\|\nabla\omn-\nabla\oa\|_2\,d\tau\\
&\lesssim \alpha\,T\sup_{0 \leq t \leq T}(\|\un\|_{s,2}^2\|\un\|_{3,2}^2)+\alpha\int_0^t\|\nabla\omn-\nabla\oa\|_2^2\,d\tau. 
\end{aligned}
\end{equation*}
By using \eqref{eq:sn} and \eqref{eq:3n} we have 
\begin{equation*}
I_4\lesssim \alpha\,N^{2(3-s)}+\alpha\int_0^t\|\nabla\omn-\nabla\oa\|_2^2\,d\tau.
\end{equation*}
Finally, concerning $I_5$ by using H\"older inequality, Sobolev embeddings and Young's inequality we have 
\begin{equation*}
\begin{aligned}
I_5&\lesssim \alpha\int_0^t\|\nabla\un\|_{\infty}\|\nabla\omn\|_{2}\|\nabla\omn-\nabla\oa\|_2\,d\tau\\
&\lesssim \alpha\,T(\sup_{0 \leq t \leq T}\|\un\|_{s,2}^4)+\alpha\int_0^t\|\nabla\omn-\nabla\oa\|_2^2\,d\tau\\
&\lesssim \alpha+\alpha\int_0^t\|\nabla\omn-\nabla\oa\|_2^2\,d\tau.
\end{aligned}
\end{equation*}
Collecting the previous estimate we get 
\begin{equation*}
\begin{aligned}
\|\oa-\omn\|_2^2+\alpha\|\nabla\oa-\nabla\omn\|_2^2&\lesssim \|\oa_0-\omega_0\|_2^2+\alpha\|\nabla\omega_0-\nabla\oa_0\|_2^2\\
&+\alpha+\alpha\,N^{2(3-s)}+\frac{1}{N^{2(s-1)}}+\frac{\alpha}{N^{2(s-2)}}\\
&+ \int_0^t\|\omn-\oa\|_2^{2}+\alpha\|\nabla\omn-\nabla\oa\|_2^2\,ds\\
&\lesssim \|\oa_0-\omega_0\|_2^2\!+\!\alpha\|\nabla\omega_0-\nabla\oa_0\|_2^2\!\\
&+ \int_0^t\|\omn-\oa\|_2^{2}+\alpha\|\nabla\omn-\nabla\oa\|_2^2\,ds\\
&+\!\frac{1}{N^{2(s-1)}}\!+\!\alpha\,N^{2(3-s)}.
\end{aligned}
\end{equation*}
By Gr\"onwall lemma we obtain 
\begin{equation}\label{eq:finalan}
\|\oa-\omn\|_2^2\lesssim \left(\frac{1}{N^{2(s-1)}}+\alpha\,N^{2(3-s)}+\|\oa_0-\omega_0\|_2^2+\alpha\|\nabla\omega_0-\nabla\oa_0\|_2^2\right). 
\end{equation}
Therefore, from \eqref{eq:finalan} and \eqref{eq:fin}, we obtain 
\begin{equation*}
\sup_{0 \leq t \leq T}\|\oa-\omega\|_2^2\lesssim\left(\frac{1}{N^{2(s-1)}}+\alpha\,N^{2(3-s)}+\|\oa_0-\omega_0\|_2^2+\alpha\|\nabla\omega_0-\nabla\oa_0\|_2^2\right).
\end{equation*}
Choosing $N\sim \alpha^{-\frac{1}{4}}$ we obtain \eqref{eq:convuths2}.
\end{proof}
\subsection*{Acknowledgments} 
S. Abbate acknowledges partial support by INdAM-GNAMPA and the hospitality of the University of Basel, where part of this work was carried out. G. Crippa is supported by the Swiss National Science Foundation through the project 212573 FLUTURA (Fluids, Turbulence, Advection) and by the SPP 2410 “Hyperbolic Balance Laws in Fluid Mechanics: Complexity, Scales, Randomness (CoScaRa)” funded by the Deutsche Forschungsgemeinschaft (DFG, German Research Foundation) through the project 200021E 217527 funded by the Swiss National Science Foundation. L.C. Berselli is partially supported by INdAM-GNAMPA and by the project PRIN 2020 “Nonlinear evolution PDEs, fluid dynamics and transport equations: theoretical foundations and applications” 20204NT8W4\_\,003. S. Spirito acknowledges partial support by INdAM-GNAMPA, by the projects PRIN 2020 “Nonlinear evolution PDEs, fluid dynamics and transport equations: theoretical foundations and applications” 20204NT8W4\_\,003, PRIN2022 “Classical equations of compressible fluids mechanics: existence and properties of non-classical solutions” 20204NT8W4\_\,003, and PRIN2022-PNRR “Some mathematical approaches to climate change and its impacts” P20225SP98\_\,002. 

\subsection*{Author Contributions} All authors contributed equally to this paper.

\subsection*{Data Availability} No datasets were generated or analyzed during the current study.

\section*{Declarations}

\subsection*{Conflict of interest} The authors declare no competing interests.

\end{document}